\newcommand{\A}{\mathbf{A}}
\newcommand{\C}{\mathbf{C}}
\newcommand{\Q}{\mathbf{Q}}
\newcommand{\R}{\mathbf{R}}
\newcommand{\Z}{\mathbf{Z}}
\newcommand{\N}{\mathbf{N}}
\renewcommand{\P}{\mathbf{P}}
\newcommand{\fa}{\mathfrak{a}}
\newcommand{\fb}{\mathfrak{b}}
\newcommand{\fm}{\mathfrak{m}}
\newcommand{\cE}{\mathcal{E}}
\newcommand{\cO}{\mathcal{O}}
\renewcommand{\a}{\alpha}
\renewcommand{\b}{\beta}
\renewcommand{\d}{\delta}
\newcommand{\e}{\varepsilon}
\newcommand{\f}{\varphi}
\newcommand{\D}{\Delta}
\newcommand{\eg}{{e.g.\ }}
\newcommand{\ie}{{i.e.\ }}
\renewcommand{\=}{:=}
\DeclareMathOperator{\Conv}{Conv}
\DeclareMathOperator{\Div}{Div}
\DeclareMathOperator{\vol}{Vol}
\DeclareMathOperator{\diam}{diam}
\DeclareMathOperator{\lip}{Lip}
\DeclareMathOperator{\sta}{Star}
\DeclareMathOperator{\Pic}{Pic}
\DeclareMathOperator{\ord}{ord}
\DeclareMathOperator{\Nw}{Nw}
\DeclareMathOperator{\val}{val}
\DeclareMathOperator{\Val}{Val}
\numberwithin{equation}{section}       % Number formulas within sections
\newtheorem{prop} {Proposition} [section]
\newtheorem{thm}[prop] {Theorem}
\newtheorem{defi}[prop] {Definition}
\newtheorem{lem}[prop] {Lemma}
\newtheorem{prop-def}[prop]{Proposition-Definition}
\newtheorem*{thmA}{Theorem A}
\newtheorem*{thmAp}{Theorem A'}
\newtheorem*{thmB}{Theorem B}
\newtheorem*{IzumisThm}{Izumi's Theorem}
\newtheorem*{defi*} {Definition}
\newtheorem*{corC}{Corollary C}
\newtheorem*{corD}{Corollary D}
\newtheorem*{corE}{Corollary E}
\newtheorem{rmk}[prop]{Remark}
\theoremstyle{remark}
\title[Izumi's Theorem]{A refinement of Izumi's Theorem}
\date{\today}
\author{S{\'e}bastien Boucksom
  \and
  Charles Favre
  \and
  Mattias Jonsson}
\address{CNRS-Universit{\'e} Paris 6\\
  Institut de Math{\'e}matiques\\
  F-75251 Paris Cedex 05\\
  France}
\email{boucksom@math.jussieu.fr}
\address{CNRS-CMLS\\ \'Ecole Polytechnique\\91128 Palaiseau Cedex France}
\email{favre@math.polytechnique.fr}
\address{Dept of Mathematics\\
  University of Michigan\\
  Ann Arbor, MI 48109-1043\\
  USA}
\email{mattiasj@umich.edu}
\thanks{Second author supported by the ANR-project BERKO\@. 
Third author supported by the CNRS and the NSF}
\begin{document}

\begin{abstract}
  We improve Izumi's inequality, which states that 
  any divisorial valuation $v$ 
  centered at a closed point $0$ on an algebraic variety $Y$
  is controlled by the order of vanishing at $0$.
  More precisely, as $v$ ranges through valuations that are
  monomial with respect to coordinates in a fixed birational 
  model $X$ dominating $Y$, we show that for any 
  regular function $f$ on $Y$ at $0$, the function
  $v\mapsto v(f)/\ord_0(f)$ is uniformly Lipschitz continuous 
  as a function of the weight defining $v$.
  As a consequence, the volume of $v$ is also a 
  Lipschitz continuous function.
  Our proof uses toroidal techniques as well as positivity 
  properties of the images of suitable nef divisors under birational morphisms.
\end{abstract}

\maketitle

\setcounter{tocdepth}{1}
\tableofcontents

%
%%%%%%%%%%%%%%%%%%%%%%%%%%%%%%%%%%%%%%%%%%%%%%%%%%%%%%%%%%%%%%%%%%%
%

\section*{Introduction}
Let $Y$ be a normal variety over an algebraically 
closed field $k$
%of arbitrary characteristic
and let $0\in Y$ be a closed point. 
Write $\fm_0$ for the maximal ideal of
the local ring $\cO_{Y,0}$ at $0$ and define 
$\ord_0:\cO_{Y,0}\to\Z_{\ge0}\cup\{\infty\}$, the order of vanishing at 0, by  
\begin{equation*}
  \ord_0(f):=\max\{j\ge 0\mid f\in\fm_0^j\},
\end{equation*}
This is a valuation if $0$ is a smooth point, but not in general.
See~\S\ref{S113} for more details.

Recall that a valuation $v$ of the function field $k(Y)$ is \emph{divisorial}
if there exists a projective birational morphism $X\to Y$, with $X$ normal,
and an irreducible Cartier divisor $E$ on $X$ such that $v$ is proportional 
to $\ord_E$, the order of vanishing along $E$. We say that $v$ is 
\emph{centered at 0} if $E$ lies above 0, or, equivalently, 
$v\ge0$ on $\cO_{Y,0}$ and $v>0$ on $\fm_0$.

Izumi's Theorem\footnote{In fact the original statement is slightly more general
assuming only $(Y,0)$ to be analytically irreducible.}
says that any divisorial valuation centered at 0
is comparable to the order of vanishing at 0:
\begin{IzumisThm}
  For any divisorial valuation $v$ of $k(Y)$ centered at 0 
  there exists a constant $C=C(v)>0$ such that 
  \begin{equation}
    C^{-1}\ord_0(f)\le v(f) \le C\ord_0(f).\tag{*}
  \end{equation}
\end{IzumisThm}
Only the right-hand inequality in~(*) is nontrivial. 
Indeed, if we set 
$c=v(\fm_0)=\min\{v(f)\mid f\in\fm_0\}$ then 
$c>0$ and $v\ge c\ord_0$.

Several versions of Izumi's Theorem can be found in the literature.
In the case when $k$ is of characteristic zero and 
$Y$ is smooth, it goes back at least to Tougeron, 
see~\cite[p.178]{tougeron} (the same proof  was used in the
context of plurisubharmonic functions by the second author in~\cite{lelong}).
A proof based on multiplier ideals is given in~\cite{ELS}. These approaches give an estimate
on the optimal constant $C$ in~(*)  in terms of log-discrepancies.

Izumi himself was mainly interested in the case of singular complex
analytic spaces, see~\cite{izumi1,izumi}. 
His argument has been generalized by Rees~\cite{Ree89}, and alternative proofs given by 
H\"ubl and Swanson~\cite{HuSw01}, and Beddani~\cite{beddani}. 
Another approach, based on the notion of key polynomials,
was recently developed by  Moghaddam~\cite{Moghaddam}, see~\cite{valtree} in the two-dimensional case. 
For a connection between Izumi's Theorem and the Artin-Rees Lemma,
see~\cite{Ron06}.

\bigskip
Our objective is not to generalize the setting of Izumi's Theorem, but to make the
statement more precise. 
Consider a projective birational morphism $\pi:X\to Y$ with $X$ 
smooth.
We assume that $Z:=\pi^{-1}(0)$ is a divisor
with simple normal crossing support such that any nonempty 
intersection between irreducible components of $Z$ is irreducible.
(We do not assume that $Z$ is reduced and 
the exceptional set of $\pi$ may be strictly larger than $Z$.
If $k$ has characteristic zero, the existence of such a morphism
follows from Hironaka's Theorem.)

The \emph{dual complex} $\Delta=\Delta(X,Z)$ is a simplicial complex encoding the 
intersections of the irreducible components of $Z$.
We can view the elements of $\Delta$ as 
\emph{quasimonomial valuations} on $\cO_{Y,0}$ 
centered at 0, see~\eg\cite{jonmus}.\footnote{Quasimonomial valuations are also known as Abhyankar valuations.}
There is a natural (integral) affine structure on $\Delta$.
Pick a metric on $\Delta$ that is compatible with this structure.

Any function $f\in\cO_{Y,0}$ defines a nonnegative 
function on $\Delta$ given by $v\mapsto v(f)$.
\begin{thmA}
  There exists a constant $A>0$ such that for any $f\in\cO_{Y,0}$, 
  the function $v\mapsto v(f)$ on $\Delta$ is concave on each face and 
  Lipschitz continuous with Lipschitz constant at most $A\ord_0(f)$.
\end{thmA}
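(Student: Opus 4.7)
My plan is to reduce the theorem on each face to a uniform bound on the Pareto-minimal multi-indices in the power-series expansion of $\pi^* f$ along the strata of $Z$, and to derive that bound from Izumi's inequality combined with positivity properties of nef divisors on $X$.

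\emph{Local model and concavity.} Fix a face $\sigma\subset\Delta$ corresponding to a stratum $W=E_{i_1}\cap\cdots\cap E_{i_r}$, and choose étale local coordinates $z_1,\ldots,z_r$ at the generic point of $W$ with $E_{i_j}=\{z_j=0\}$. Any $f\in\cO_{Y,0}$ admits an expansion $\pi^* f=\sum_{\alpha\in\Z^r_{\geq 0}} c_\alpha(w)\,z^\alpha$, and the quasimonomial valuation $v_t$ parametrised by a weight $t\in\R^r_{\geq 0}$ satisfies
\begin{equation*}
v_t(f)=\min\bigl\{\langle t,\alpha\rangle\ :\ c_\alpha\not\equiv 0\bigr\}.
\end{equation*}
As a minimum of linear forms in $t$, this is concave and piecewise linear on $\sigma$, establishing the concavity statement. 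Equivalently it is the ``lower'' support function of the Newton polyhedron $N_\sigma(f):=\conv\{\alpha:c_\alpha\not\equiv 0\}+\R^r_{\geq 0}$, and a standard computation shows that the Lipschitz constant of $v_t(f)$ on $\sigma$ is controlled, up to the bilipschitz equivalence coming from the chosen affine metric, by $\sup\|\alpha\|_\infty$ over those $\alpha$ that are Pareto-minimal vertices of $N_\sigma(f)$.

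\emph{Reduction to the key estimate.} Since $\Delta$ has only finitely many faces, the Lipschitz bound therefore reduces to the following uniform claim: there exists $A>0$, depending only on $(X,Z,\pi)$, such that $\|\alpha\|_\infty\leq A\,\ord_0(f)$ for every $f\in\cO_{Y,0}$ and every Pareto-minimal $\alpha$ appearing in the expansion of $\pi^*f$ at any stratum. The strategy is to pick, for each such $\alpha$, a weight $t_\alpha$ in the relative interior of the normal cone to $N_\sigma(f)$ at $\alpha$, normalised to lie on $\sigma$; then $v_{t_\alpha}(f)=\langle t_\alpha,\alpha\rangle$. Applying Izumi's theorem at the finitely many divisorial vertices of $\Delta$ and interpolating via the concavity already established yields a uniform $C$ with $\langle t_\alpha,\alpha\rangle\leq C\,\ord_0(f)$, whence $\alpha_j\leq C\,\ord_0(f)/(t_\alpha)_j$. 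The bound $A=C/\epsilon$ then follows as soon as one produces $t_\alpha$ with $\min_j(t_\alpha)_j\geq\epsilon$ for some $\epsilon>0$ independent of $f$ and $\alpha$.

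\emph{Main obstacle: uniformity via positivity.} The uniformity of $\epsilon$ is the heart of the proof, and is where the classical Izumi inequality by itself is insufficient: Izumi controls the values $v(f)$ only at the divisorial vertices of $\Delta$, while we need control on the gradient of $v\mapsto v(f)$ throughout the interior of each face. A naive combinatorial choice of $t_\alpha$ (for example the barycentre of the normal cone, or $t_\alpha=\alpha/\|\alpha\|$) can have coordinates that degenerate as $\alpha$ varies, so it does not work. The fix, following the strategy advertised in the abstract, is geometric: on a suitable further blowup one constructs $\pi$-nef divisors whose pushforwards under $\pi$ are controlled by powers of $\fm_0$, and uses the resulting positivity to force the normal cones of Pareto-minimal $\alpha$'s to always intersect a fixed compact subset of $\sigma^\circ$ determined by the relative nef cone of $\pi$ (and not by $f$ or $\alpha$). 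This interior-sensitive positivity input is precisely what supplies the uniform $\epsilon$, and hence the uniform Lipschitz constant $A$.
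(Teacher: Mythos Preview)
Your reduction is sound: bounding the Pareto-minimal vertices of $\Nw_\sigma(f)$ by $A\,\ord_0(f)$ is exactly the paper's Theorem~A$'$, and Proposition~\ref{P114} makes this equivalent to the Lipschitz bound. The concavity argument is also correct and matches Proposition~\ref{P102}.

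However, there are two genuine gaps.

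\textbf{The interpolation step is wrong.} You claim that Izumi at the vertices together with concavity gives $\langle t_\alpha,\alpha\rangle=v_{t_\alpha}(f)\le C\,\ord_0(f)$ for $t_\alpha$ anywhere in $\sigma$. But concavity only bounds $v_t(f)$ \emph{below} by convex combinations of vertex values; it says nothing about the maximum. Concretely, $\chi(t_1,t_2)=\min(Nt_1,Nt_2)$ on the simplex $t_1+t_2=1$ vanishes at both vertices yet equals $N/2$ at the barycentre. So a uniform bound $\max_\Delta v(f)\le C\,\ord_0(f)$ does \emph{not} follow from Izumi at the vertices; it is itself a uniform Izumi statement over the whole simplex, essentially equivalent to what you are trying to prove. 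In the paper this $C^0$ bound is obtained only as part of the full inductive $C^{0,1}$ estimate, not as a preliminary step.

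\textbf{The uniform $\e$ is neither proved nor true as stated.} The normal cone at a Pareto-minimal vertex can be arbitrarily thin: if the Newton polyhedron has adjacent vertices $(0,N)$ and $(1,0)$, the normal cone at $(0,N)$ forces $t_2\le 1/(N+1)$, so no point of it has all coordinates $\ge\e$ once $N$ is large. Whether such configurations actually arise from some $\pi^*f$ is precisely the geometric constraint you would have to exploit, but your final paragraph gives no construction---``on a suitable further blowup one constructs $\pi$-nef divisors\ldots'' is a description of a hope, not an argument.

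For comparison, the paper does not try to locate $t_\alpha$ inside normal cones at all. It bounds the Lipschitz constant of $\chi|_\tau$ by induction on $\dim\tau$: the base case (values at vertices) is an elementary intersection-theoretic computation close to Izumi's original argument, and the inductive step bounds the directional derivatives $D_v\chi(e)$ at boundary points $v$ by constructing an explicit toroidal subdivision $\D'$ of $\D$ adapted to $v$ (\S\ref{S103}), passing to the associated birational model $X'\to X$, and comparing intersection numbers there (Lemmas~\ref{L101}--\ref{L102}). The positivity input is the effectivity of a specific residual divisor $\tilde G'|_{E'_J}$ on $X'$, not an abstract nef-cone statement. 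Theorem~A then follows from Theorem~B because $\theta_G=0$ for $G=\div(\pi^*f)$ and $\min_\Delta\chi\le(N+1)\ord_0(f)$ by the Rees-valuation argument of \S\ref{S113}.
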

The constant $A$ depends on $X$ and on the metric on $\Delta=\Delta(X,Z)$
but not on $f$. 

It is not hard to see that Theorem~A implies Izumi's Theorem
in the case when the base field $k$ has characteristic zero.
Indeed, in view of Hironaka's theorem~\cite{Hir64}, 
any divisorial valuation on $k(Y)$ centered at 0
is proportional to a point $v$ in some dual complex $\Delta$
of some $X$ as above; we can even choose $v$ as a vertex.
Further, $Z$, and hence $\D$, is connected as a consequence of 
Zariski's Main Theorem. 
By Theorem~A we have 
\begin{equation*}
  \max_{v\in\D}v(f)\le (1+A\diam(\D))\min_{v\in\D}v(f).
\end{equation*}
On the other hand, one can show (see~\S\ref{SpfIzu}) 
that $\min_{v\in\D}v(f)$ is comparable to 
$\ord_0(f)$; hence Izumi's Theorem follows.

\medskip
One can also rephrase Theorem~A in terms of Newton polyhedra.
See~\S\ref{S116} for details on what follows.
Let $E_i$, $i\in I$ be the irreducible components of $Z$.
Pick $J\subset I$ such that 
$E_J:=\bigcap_{j\in J}E_j\ne\emptyset$, and let $z_j\in\cO_{X,\xi_J}$, 
$j\in J$ be a system of coordinates at the generic point $\xi_J$ of $E_J$
such that $E_j=\{z_j=0\}$ for $j\in J$. 
Using Cohen's Theorem, we can expand
any $f\in\cO_{Y,0}\subset\cO_{X,\xi_J}$ as a formal power series
in the $z_j$ with coefficients in the residue field of $\xi_J$.
Let $\Nw(f,J)\subset\R_{\ge0}^J$ be the Newton polyhedron of this expansion.
Fix a norm on $\R^J$.
\begin{thmAp}
  There exists a constant $A>0$ such that for any $f\in\cO_{Y,0}$
  and any $J$ with $E_J\ne\emptyset$, 
  all extremal points of the Newton polyhedron $\Nw(f,J)$
  are of norm at most $A\ord_0(f)$.
\end{thmAp}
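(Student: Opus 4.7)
The plan is to deduce Theorem~A' from Theorem~A by a duality argument: extremal points of $\Nw(f,J)$ arise as gradients of the piecewise linear concave function $v\mapsto v(f)$ restricted to the face $\sigma_J$ of $\D$ corresponding to $J$, and the Lipschitz bound of Theorem~A forces these gradients to be bounded.

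Concretely, the interior of $\sigma_J$ parameterizes monomial valuations $v_\alpha$ in the coordinates $z_j$ with weights $\alpha=(\alpha_j)_{j\in J}\in\R^J_{>0}$ given by $v_\alpha(z_j)=\alpha_j$; for any $f\in\cO_{Y,0}$ one has the familiar toric formula
\[
v_\alpha(f)=\min\bigl\{\,\langle\alpha,\beta\rangle\,:\,\beta\in\Nw(f,J)\,\bigr\},
\]
so $\alpha\mapsto v_\alpha(f)$ is concave and piecewise linear, its linear pieces being dual to the extremal points of $\Nw(f,J)$. Fix such an extremal point $\beta_0$. Since $\Nw(f,J)\supset\beta_0+\R^J_{\geq 0}$, any $\alpha_0$ for which $\beta_0$ is the \emph{unique} minimizer of $\langle\alpha_0,\cdot\rangle$ must satisfy $\alpha_{0,j}>0$ for every $j$---otherwise $\beta_0+e_j$ would also be a minimizer---so $\alpha_0$ lies in the interior of $\sigma_J$. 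This property is open in $\alpha$, so there is a neighborhood $U\subset\sigma_J$ of $\alpha_0$ on which $v_\alpha(f)=\langle\alpha,\beta_0\rangle$ is affine with gradient $\beta_0$.

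By Theorem~A this affine function is Lipschitz with constant at most $A\,\ord_0(f)$ in the metric on $\D$ restricted to $\sigma_J$. The integral affine structure identifies $\sigma_J$, via the weight chart $v\mapsto(v(z_j))_{j\in J}$, with a region of $\R^J$, and the metric restricts there to a fixed norm; the Lipschitz constant of an affine function equals the dual norm of its gradient, so $\|\beta_0\|_*\leq A\,\ord_0(f)$. Equivalence of norms in finite dimension converts this into $\|\beta_0\|\leq A'\,\ord_0(f)$ for the ambient norm chosen in the statement, and taking the maximum over the finitely many $J\subset I$ with $E_J\ne\emptyset$ produces a single constant valid for all $f$ and $J$. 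The main care required is matching the weight-coordinate chart on $\sigma_J$ with the metric inherited from the integral affine structure on $\D$; the two differ by a linear change of coordinates depending only on $X$, and the resulting bounded distortion is absorbed into the final constant.
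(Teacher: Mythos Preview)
Your approach is essentially the one the paper takes: both arguments identify extremal points of $\Nw(f,J)$ with the linear pieces of the concave function $v\mapsto v(f)$ on $\sigma_J$ and then invoke Theorem~A. The paper packages the convex-analytic step as Proposition~\ref{P114}, while you argue it directly.

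There is, however, a genuine gap. The simplex $\sigma_J$ is $(|J|-1)$-dimensional: under the weight chart it lies in the affine hyperplane $\{\sum_{j\in J} b_j\alpha_j=1\}\subset\R^J$, not in an open region of $\R^J$. Hence the Lipschitz constant of the affine function $\alpha\mapsto\langle\alpha,\beta_0\rangle$ on $\sigma_J$ is the dual norm of the \emph{restriction} $\beta_0|_W$ to the tangent hyperplane $W=\{\sum_j b_j\alpha_j=0\}$, not of $\beta_0$ itself. The extreme case $|J|=1$ makes this vivid: $\sigma_J$ is a single point, the Lipschitz constant is zero, yet the unique extremal point of $\Nw(f,J)$ is $\ord_{E_j}(f)$, which you must still bound. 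To control the component of $\beta_0$ transverse to $W$ you also need a bound on the \emph{value} $v_{\alpha_0}(f)=\langle\alpha_0,\beta_0\rangle$, i.e.\ on $\|\chi_f\|_{C^0(\sigma_J)}$. This is available---it follows from Theorem~A together with $\min_\Delta\chi_f\le(N+1)\ord_0(f)$ (Lemma~\ref{L113} and Theorem~\ref{T106})---but you have not invoked it. The paper's Proposition~\ref{P114} is stated with the full $C^{0,1}$-norm precisely for this reason (see inequality~\eqref{equ:normes} in its proof). Once you add the $C^0$ bound, your argument goes through.
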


Theorem~A will be a consequence of a more general result that we now describe.
Let $X$ be a smooth, quasiprojective variety over $k$ and 
$Z\subset X$ an effective divisor with proper  and connected simple normal crossing 
support such that any nonempty intersection
between irreducible components of $Z$ is irreducible.
We view the elements of the dual complex 
$\Delta=\Delta(X,Z)$ as rank $1$ valuations
on the function field of $X$ normalized by $v(Z)=1$. 

Fix an effective  
divisor $G$ on some open neighborhood of
$Z$ in $X$. 
We can define $v(G)$ for any $v\in\D$ using local defining equations
of $G$.
Thus $G$ gives rise to a function $\chi=\chi_G$ on $\D$,
defined by $\chi(v):=v(G)$.
Fix a line bundle $M\in\Pic(X)$ that is ample on $Z$.
\begin{thmB}
  There exist constants $A$ and $B$ such that for any $G$ as above,
  the function $\chi=\chi_G$ on $\Delta$ is concave on each face and 
  Lipschitz continuous with Lipschitz constant at most 
  \begin{equation*}\label{eqB}
    A\min_\Delta\chi+B\max_J|(G\cdot M^{n-|J|-1}\cdot E_J)|, \tag{**}
  \end{equation*}
  where the maximum is over subsets $J\subset I$ for which $E_J\ne\emptyset$.
  Here the constants $A$ and $B$ depend on $X$, $M$ and the metric chosen on 
  $\Delta=\Delta(X,Z)$, but not on $G$.
\end{thmB}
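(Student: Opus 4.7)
My strategy is to work face by face on $\Delta$, translate the Lipschitz bound on each face into a bound on extremal vertices of the associated Newton polyhedron, and then estimate these vertices by passing to a toroidal modification that converts each extremal vertex into a divisorial valuation whose pullback order along the exceptional divisor is controlled by the nef class $M$.

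For concavity and Lipschitz continuity on a single face $\sigma_J$ of $\Delta$ corresponding to $J\subset I$ with $E_J\ne\emptyset$, I would parameterize the quasimonomial valuations $v\in\sigma_J$ by their weights $w=(v(E_j))_{j\in J}$, subject to the normalization $v(Z)=1$. Choosing local equations $z_j$ for $E_j$ at $\xi_J$ and expanding a local defining equation $g$ of $G$ as a formal power series $\hat g=\sum_\alpha c_\alpha z^\alpha$ in $\hat\cO_{X,\xi_J}$, one obtains
\[
\chi(v)=v(g)=\min\{\langle w,\alpha\rangle : c_\alpha\ne 0\},
\]
an infimum of linear forms in $w$, hence concave on $\sigma_J$. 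Since the infimum is realized over the finitely many vertices of the Newton polyhedron $\Nw(g,J)$, the function $\chi|_{\sigma_J}$ is piecewise linear, and its Lipschitz constant is bounded (up to a factor depending only on the chosen metric on $\sigma_J$) by $\max_\alpha\|\alpha\|_\ast$ over these extremal vertices $\alpha$. Because $\Delta$ has only finitely many faces, a uniform bound per face yields a global bound.

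The main step, and the main obstacle, is to bound the extremal vertices $\alpha$ of $\Nw(g,J)$ by the expression $(**)$. Given such an $\alpha$, one can pick a weight $w^\ast\in\R^J_{>0}$ uniquely minimizing $\langle w^\ast,\cdot\rangle$ on $\Nw(g,J)$; this defines a monomial valuation $v^\ast$, and a suitable toroidal modification $\pi:X'\to X$ extracts $v^\ast$ as $\ord_{E'}/N$ for some exceptional prime divisor $E'\subset X'$, giving $\ord_{E'}(\pi^\ast G)=N\langle w^\ast,\alpha\rangle$. My plan is to bound $\ord_{E'}(\pi^\ast G)$ using the nef class $\pi^\ast M$: after subtracting the baseline multiplicity of $\pi^\ast G$ along the whole exceptional locus---which accounts for the $A\min_\Delta\chi$ term in $(**)$---one intersects the residual effective class against a suitable nef $(n-2)$-cycle built from $\pi^\ast M$ and exceptional components, exploits the negative-definiteness of exceptional divisors of a birational morphism tested against such positive classes (this is the ``positivity of images of nef divisors under birational morphisms'' alluded to in the abstract), and pushes the resulting inequality down to $X$ via the projection formula to recover intersection numbers of the form $(G\cdot M^{n-|J|-1}\cdot E_J)$. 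The main technical difficulty is orchestrating this construction uniformly in $G$, so that the combinatorial complexity of the toroidal modification and the constants involved depend only on $X$, $M$, and the fixed metric on $\Delta$; this will require a combinatorial resolution of the cones dual to all possible Newton polyhedra $\Nw(g,J)$, controlled intrinsically in terms of $X$ and $Z$.
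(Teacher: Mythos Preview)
Your reduction of the Lipschitz bound on a face $\sigma_J$ to a bound on the extremal vertices $\alpha$ of $\Nw(g,J)$ is fine and matches Proposition~\ref{P114}. The gap is in the next step. Extracting a \emph{single} exceptional divisor $E'$ over the weight $w^\ast$ yields only the scalar $\ord_{E'}(\pi^\ast G)/N=\langle w^\ast,\alpha\rangle=\chi(w^\ast)$: one linear relation among the $|J|$ coordinates of $\alpha$, not a bound on $\|\alpha\|$. Bounding $\chi(w^\ast)-\min_\Delta\chi$ controls the oscillation of $\chi$, which does not control its Lipschitz constant (take $\chi(t)=\min(Kt,1)$ on $[0,1]$: oscillation $1$, Lipschitz constant $K$). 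There is also a geometric obstruction to your push-forward: to recover $(G\cdot M^{n-|J|-1}\cdot E_J)$ via the projection formula you need a cycle on $X'$ of dimension $n-|J|$ mapping generically finitely to $E_J$, and a single prime divisor $E'$ does not furnish one when $|J|\ge 2$ (indeed $\pi_\ast E'=0$ as a cycle).

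The paper instead extracts a whole small simplex. Fixing a vertex $e$ of a face $\tau$ and a rational point $v$ in the relative interior of the opposite face $\sigma=\sigma_J$ where $\chi|_\sigma$ is affine, one scales $\sta_\Delta(\sigma)$ by a factor $\varepsilon$ about $v$ to obtain a projective simplicial subdivision $\Delta'$ with new vertices $e'_j=\varepsilon e_j+(1-\varepsilon)v$. On the associated toroidal model $\rho:X'\to X$ (Theorem~\ref{T102}) the stratum $E'_J=\bigcap_{j\in J}E'_j$ has the \emph{same} dimension as $E_J$ and maps to it generically finitely, so the projection formula is nondegenerate. Since $\chi(e'_j)-\chi(v)=\varepsilon\,D_v\chi(e_j)$, all directional derivatives are captured simultaneously, and the linearization identity (Lemma~\ref{L101})
\[
\rho^\ast\!\Bigl(\chi(v)\,Z+\sum_{j\in L} D_v\chi(e_j)\,b_jE_j\Bigr)\Big|_{E'_J}=\sum_{j\in L}\chi(e'_j)\,b'_jE'_j\Big|_{E'_J}
\]
lets one intersect with $(\rho^\ast M)^{n-|J|-1}$ and push down to an inequality on $X$ bounding a positive combination of the $D_v\chi(e_j)$ by $\theta_G$, plus terms already controlled by induction on $\dim\tau$. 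The modification $X'$ does depend on $G$ through $v$ and $\varepsilon$, but the resulting inequality lives entirely on $X$ and involves only the fixed numbers $(E_j\cdot E_J\cdot M^{n-|J|-1})$; no ``universal resolution of all Newton polyhedra'' is needed, and your uniformity worry dissolves.
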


Theorem~A follows from Theorem~B by picking $G$ as the divisor of 
$f\circ\pi$.
Indeed, the second item in~\eqref{eqB} vanishes, and one can show that $\min_\D\chi$ and $\ord_0(f)$ are comparable.

\medskip
Theorem~B can also be applied to study polynomials at infinity.
Fix an embedding $\A^m\subset\P^m$. 
Following the terminology introduced in~\cite{dyncomp,dynberko} in a dynamical context, 
we say that an \emph{admissible compactification} of $\A^m$
is a smooth projective variety $X$ over $k$ 
together with a projective birational morphism $\pi:X\to\P^m$
that is an isomorphism over $\A^m$ and such that if  $Z$ is
the pullback of  the hyperplane at infinity $\P^m\setminus\A^m$, then $Z$ has simple normal crossing support
and any nonempty intersection between irreducible components of
$Z$ is irreducible. By Zariski's Main Theorem, the support of $Z$ is 
connected.
We can view the elements of the dual complex $\Delta=\Delta(X,Z)$ as
valuations on $k(X)=k(\P^m)$ as above. 
In particular, any polynomial $P\in k[\A^m]$ defines a function 
on $\Delta$, given by $v\mapsto v(P)$. This function is
easily seen to be concave and piecewise affine on the faces of $\D$.
\begin{corC}
  There exists a constant $B>0$ such that if $P\in k[\A^m]$ is a
  polynomial of degree $d\ge1$, then the function
  $v\mapsto v(P)$ on $\D$ is
  Lipschitz continuous with Lipschitz constant 
  at most $Bd$.
\end{corC}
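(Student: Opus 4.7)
The plan is to realize Corollary~C as an application of Theorem~B to the effective divisor on $X$ cut out by $P$. Write $\widetilde{P}$ for the homogenization of $P$, regarded as a section of $\cO_{\P^m}(d)$; since $\deg P=d$, this section does not vanish identically along the hyperplane at infinity $H$, and the rational function $P$ on $\P^m$ has divisor $\div_0(\widetilde{P})-dH$. Setting $G:=\pi^\ast\div_0(\widetilde{P})$ and using $Z=\pi^\ast H$, the rational function $P\in k(X)$ has divisor $G-dZ$; hence for every $v\in\D$, normalized by $v(Z)=1$, we obtain
\begin{equation*}
v(P)=\chi_G(v)-d.
\end{equation*}
The functions $v\mapsto v(P)$ and $\chi_G$ thus differ by a constant and share the same Lipschitz constant, so it suffices to bound the Lipschitz constant of $\chi_G$ by $Bd$.

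Now fix an ample line bundle $M\in\Pic(X)$, which is automatically ample on $Z$. Theorem~B, applied to the effective divisor $G$, then produces constants $A,B$ depending only on $X$, $M$, and the metric on $\D$, independently of $P$. I would estimate the two terms in~\eqref{eqB} as follows. For the minimum, let $E_0\subset X$ be the strict transform of $H$ under $\pi$. Since $\pi$ induces an isomorphism of the local rings at the generic points of $E_0$ and $H$, the component $E_0$ appears in $Z$ with multiplicity one, so the vertex $v_0:=\ord_{E_0}$ lies in $\D$; the same identification gives $v_0(G)=\ord_H(\widetilde{P})=0$. Combined with the effectivity of $G$, which forces $\chi_G\geq 0$ on $\D$, this yields $\min_\D\chi_G=0$. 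For the intersection term, the numerical equivalence $G\equiv d\,\pi^\ast H$ gives
\begin{equation*}
(G\cdot M^{n-|J|-1}\cdot E_J)=d\cdot(\pi^\ast H\cdot M^{n-|J|-1}\cdot E_J)
\end{equation*}
for every stratum $E_J\ne\emptyset$, the absolute value of which is $d$ times a constant depending only on $X$, $M$, and the stratification of $Z$.

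Substituting these two estimates into~\eqref{eqB} yields a Lipschitz bound of the form $B'd$ for $\chi_G$, hence the same for $v\mapsto v(P)$; relabeling $B'$ as $B$ proves the corollary. The only subtle step in the plan is the identification $\min_\D\chi_G=0$: once one recognizes the strict transform of the hyperplane at infinity as a vertex of $\D$ along which $G$ vanishes to order zero, the rest is a direct consequence of the numerical equivalence $G\equiv d\,\pi^\ast H$ and a routine invocation of Theorem~B.
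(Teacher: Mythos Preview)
Your proof is correct and follows essentially the same approach as the paper: you pull back the zero locus of $P$ to obtain $G$, observe that $v(P)=\chi_G(v)-d$ so the Lipschitz constants agree, identify $\min_\D\chi_G=0$ via the vertex corresponding to the strict transform of the hyperplane at infinity, and then invoke Theorem~B together with the numerical equivalence $G\equiv d\,\pi^*H$ to extract the factor $d$ from the intersection term. The paper's argument is identical in structure, only phrasing the last step as $\cO_X(G)=L_1^{\otimes d}$ rather than as a numerical equivalence.
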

This follows by taking the divisor $G$ as the pullback to $X$ of the hypersurface 
on $\P^m$ defined as the zero locus of $P$.
We have $v(P)=v(G)-d=\chi_G(v)-d$ 
for $v\in\D$, so the Lipschitz constant of $v\mapsto v(P)$
is the same as that of $\chi_G$. 
Now, $\min_\D\chi_G=0$, so Corollary~C follows from Theorem~B.

\medskip

Finally, we use Theorem~A in order to study the variation of several natural numerical 
invariants associated to rank $1$ valuations. As above, let $Y$ be a normal variety of dimension $m$, defined over an algebraically closed field $k$, and
let $0$ be a closed point of $Y$.
Pick any two $\fm_0$-primary ideals $\fa_1, \fa_2 \subset \cO_{Y,0}$, and denote by $e(\fa_i)$ their Hilbert-Samuel multiplicities.
It is a theorem due to Teissier and Risler~\cite[\S 2]{cycles evanescents} that 
the function $(r,s)\mapsto e(\fa_1^r\cdot \fa_2^s)$ is a homogeneous polynomial of degree $m$
and that we can find nonnegative integers $e(\fa_1^{[m-i]};\fa_2^{[i]})$, $0\le i\le m$, such that 
$$
e(\fa_1^r\cdot \fa_2^s) = \sum_{i=0}^m {m\choose i} \, e(\fa_1^{[m-i]};\fa_2^{[i]})\, r^{m-i}s^i
$$
for all $r,s \in \Z_{\ge0}$.

Pick any  rank $1$ valuation $v$ on $\cO_{Y,0}$ centered at $0$. Then  the sequence of valuation ideals $\fa(v,n)=\{f\in\cO_{Y,0}\mid v(f)\ge n\}$
forms a graded sequence in the sense that 
$\fa(v,n)\cdot \fa (v,n')\subset \fa (v,n+n')$ for any $n,n'$.
One can show (see \S\ref{S130} below) that for any integer $0\le i\le m$, the following limit exists:
\begin{equation*}
  \a_i(v):=\lim_{n\to\infty} \frac{e(\fa (v,n)^{[i]}; \fm_0^{[m-i]})}{n^i}~.
\end{equation*}

When $i=m$, it is 
a theorem due to~\cite{ELS,LazMus} (see also~\cite{cutkosky-mult}) show that 
the sequence $\frac{m!}{n^m}\, \dim_k(\cO_{Y,0}/\fa(v,n))$ converges and that its
limit is equal to $\a_m(v)$. This invariant is usually referred to as the 
\emph{volume} of a valuation.
This invariant is quite subtle, since it can be irrational
even when the valuation is divisorial, see~\cite[example 6]{cutkosky-srinivas} or~\cite{Kur03}.

\smallskip

Now let $X$, $Z$ and $\D=\D(X,Z)$ be as in Theorem~A.
\begin{corD}
For any $0\le i\le m$,  the function $v\mapsto\a_i(v)$ is 
  Lipschitz continuous on $\Delta$.
\end{corD}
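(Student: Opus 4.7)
The plan is to use the Lipschitz continuity of $v \mapsto v(f)$ from Theorem~A to compare the valuation ideals $\fa(v,n)$ for nearby $v \in \D$, and then push this comparison through the mixed multiplicities defining $\a_i$.

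The first step is to extract a uniform Izumi estimate on $\D$. Choosing generators $g_1, \dots, g_r$ of $\fm_0$, the function $v \mapsto v(\fm_0) = \min_j v(g_j)$ is continuous on the compact simplicial complex $\D$ by Theorem~A, and strictly positive since each $v \in \D$ is centered at $0$. Hence there exists $c_0 > 0$ with $v \ge c_0\, \ord_0$ on $\cO_{Y,0}$ uniformly in $v \in \D$. Combined with Theorem~A, this yields
$$ |v(f) - w(f)| \le A\,\ord_0(f)\, d(v,w) \le \tfrac{A}{c_0}\, v(f)\, d(v,w) $$
for all $f \in \cO_{Y,0}$ and $v, w \in \D$. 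Setting $\lambda = 1 - (A/c_0) d(v,w)$, the implication $v(f) \ge n \Rightarrow w(f) \ge \lambda n$ gives, for $d(v,w)$ small enough to make $\lambda > 0$, the ideal inclusion
$$ \fa(v, n) \subset \fa\bigl(w, \lfloor \lambda n \rfloor\bigr). $$

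Next I would invoke the classical monotonicity of mixed multiplicities under inclusion of $\fm_0$-primary ideals to deduce
$$ e\bigl(\fa(v,n)^{[i]}; \fm_0^{[m-i]}\bigr) \ge e\bigl(\fa(w,\lfloor \lambda n\rfloor)^{[i]}; \fm_0^{[m-i]}\bigr). $$
Dividing by $n^i$ and letting $n \to \infty$ (noting that $\lfloor \lambda n \rfloor / n \to \lambda$) gives $\a_i(v) \ge \lambda^i \a_i(w)$, and the same argument with $v,w$ interchanged yields $\a_i(w) \ge \lambda^i \a_i(v)$.

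To conclude, I need a uniform upper bound on $\a_i$ over $\D$. The reverse Izumi inclusion $\fm_0^{\lceil n/c_0 \rceil} \subset \fa(v,n)$, combined with homogeneity of mixed multiplicities, gives $\a_i(v) \le e(\fm_0)/c_0^i =: M$ uniformly on $\D$. Therefore
$$ |\a_i(v) - \a_i(w)| \le (1 - \lambda^i)\, M \le \tfrac{i A M}{c_0}\, d(v,w) $$
for $d(v,w)$ small, which upgrades to a global Lipschitz estimate by the length-space structure of $\D$. The main obstacle will be the invocation of monotonicity of mixed multiplicities, together with verifying that the limit defining $\a_i$ behaves correctly along the subsequence $\lfloor \lambda n \rfloor$; both are standard facts but warrant a careful reference to the literature (e.g.\ Teissier's work on mixed multiplicities and the construction of $\a_i$ in \S\ref{S130}).
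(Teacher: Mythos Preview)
Your proposal is correct and follows essentially the same route as the paper's proof: use Theorem~A to obtain an inclusion $\fa(v,n)\subset\fa(w,\lambda n)$ with $\lambda=1-O(d(v,w))$, apply monotonicity of mixed multiplicities, pass to the limit to get $\a_i(v)\ge\lambda^i\a_i(w)$, and combine with a uniform upper bound on $\a_i$ and the elementary inequality $1-\lambda^i\le i(1-\lambda)$. The only cosmetic difference is that the paper uses the normalization $v(Z)=1$ (hence $v(\fm_0)=1$) to get $v\ge\ord_0$ directly, whereas you extract the constant $c_0$ by compactness; also, the paper writes $\fa(w,n(1-A\|v-w\|))$ for real argument rather than introducing $\lfloor\lambda n\rfloor$, which sidesteps the subsequence issue you flag.
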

This result is new even in the case $i=m$. Note that Fulger~\cite{fulger} has introduced a notion of local volume for  divisors on $X$, and proved that this local volume
is locally Lipschitz in the relative N\'eron-Severi space $N(X/Y)$, see Proposition 1.18 in ibid. It is unlikely that one can recover the Lipschitz continuity of $\a_m$ through his result since there is no canonical way to attach to a valuation $v\in\D(X,Z)$ a divisor in $X$ 
that  computes $\a_m(v)$.
\footnote{When $0$ is an isolated singularity, $Z(v):= \lim\frac1n Z(\fa(v,n))$ is  a nef $b$-divisor over $0$ in the sense of ~\cite{BdFF}, and one can show that $\a_m(v) = - Z(v)^m$. However it is unclear how to use this interpretation to prove the continuity of $\a_m$.}
His result is, however, close in spirit to the continuity statement
for the (global) volume function on the N\'eron-Severi space of a projective variety, see~\cite{LazBook}.
The latter statement has been strengthened in~\cite{diffvol} to show that the global volume function is in fact differentiable on the N\'eron-Severi space. By analogy, one can  ask  whether or not Fulger's local volume, and the functions $v\mapsto \a_i(v)$
are differentiable.

\smallskip

In the case $Y$ is smooth at $0$,  we have
$$\a_1(v) = \left(\sup_{\fm_0} \frac{v}{\ord_0}\right)^{-1}~.$$ 
In the general singular case, 
one can find a constant $C>0$ such that  
$$C^{-1}\, \a_1(v) \le \left(\sup_{\fm_0} \frac{v}{\ord_0} \right)^{-1}\le C \a_1(v)$$ for all $v$,
see Proposition \ref{p8}. In particular, Corollary~D gives a control on the variation of the optimal constant appearing in Izumi's theorem.
More generally we obtain
\begin{corE}
The function $(v,v')\mapsto \sup_{\fm_0} \frac{v}{v'}$ 
is  Lipschitz continuous on $\Delta\times \D$.
\end{corE}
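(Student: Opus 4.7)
The plan is to combine the Lipschitz estimate of Theorem~A with uniform Izumi-type bounds on $\Delta$. Set
$$\psi(v,v'):=\sup_{\fm_0}\frac{v}{v'}=\sup\left\{\frac{v(f)}{v'(f)}:f\in\fm_0\setminus\{0\}\right\}.$$
The goal is to exhibit a constant $K$ such that $|\psi(v_1,v'_1)-\psi(v_2,v'_2)|\le K\bigl(d(v_1,v_2)+d(v'_1,v'_2)\bigr)$ for all pairs in $\Delta\times\Delta$, where $d$ denotes the fixed metric on $\Delta$.

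First I would establish that there exist constants $0<c_0\le C_0<\infty$, depending only on the data, such that
$$c_0\,\ord_0(f)\le v(f)\le C_0\,\ord_0(f)\quad\text{for every }v\in\Delta\text{ and every }f\in\fm_0.$$
For the lower bound, $f\in\fm_0^{\ord_0(f)}$ gives $v(f)\ge\ord_0(f)\cdot v(\fm_0)$, since $v$ is additive on products and nonnegative on $\cO_{Y,0}$. If $g_1,\dots,g_r$ generate $\fm_0$, then $v\mapsto v(\fm_0)=\min_i v(g_i)$ is a minimum of continuous (even piecewise affine) functions on the finite simplicial complex $\Delta$, hence continuous and strictly positive on the compact set $\Delta$, yielding $c_0:=\min_\Delta v(\fm_0)>0$. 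For the upper bound, fix a vertex $v_0\in\Delta$, which is divisorial; Izumi's theorem supplies a constant $C$ with $v_0\le C\,\ord_0$, and Theorem~A then propagates this:
$$v(f)\le v_0(f)+A\,\ord_0(f)\,d(v,v_0)\le\bigl(C+A\,\diam(\Delta)\bigr)\ord_0(f)=:C_0\,\ord_0(f).$$

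With these bounds at hand, I would use the identity
$$\frac{v_1(f)}{v'_1(f)}-\frac{v_2(f)}{v'_2(f)}=\frac{v_1(f)-v_2(f)}{v'_1(f)}+\frac{v_2(f)}{v'_1(f)\,v'_2(f)}\bigl(v'_2(f)-v'_1(f)\bigr).$$
Applying Theorem~A to control the two numerators $v_1(f)-v_2(f)$ and $v'_2(f)-v'_1(f)$ by $A\,\ord_0(f)$ times the respective distances, and the uniform estimates above to control the denominators and to bound $v_2(f)$, one obtains
$$\left|\frac{v_1(f)}{v'_1(f)}-\frac{v_2(f)}{v'_2(f)}\right|\le\frac{A}{c_0}\,d(v_1,v_2)+\frac{A\,C_0}{c_0^{2}}\,d(v'_1,v'_2).$$
The right-hand side is \emph{independent of} $f\in\fm_0\setminus\{0\}$, so taking the supremum over $f$ and then exchanging the roles of the two pairs gives the desired Lipschitz bound for $\psi$.

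The only step that requires some care is the uniform lower bound $v(f)\ge c_0\,\ord_0(f)$ on $\Delta$: it rests on the compactness of the dual complex (in the Theorem~A setting $Z=\pi^{-1}(0)$ is proper, so $\Delta$ is a finite simplicial complex) and on the strict positivity of $v\mapsto v(\fm_0)$, which reflects the fact that every valuation in $\Delta$ is genuinely centered at $0$. Once this is in place, nothing deeper than Theorem~A is needed; in particular Corollary~D and the machinery surrounding the invariants $\a_i(v)$ play no role in the argument.
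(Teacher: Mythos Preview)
Your argument is correct and is genuinely more elementary than the paper's. The paper proceeds indirectly: it first reuses the ideal inclusion $\fa(w,n)\subset\fa(v,n(1-A\|v-w\|))$ established in the proof of Corollary~D, then invokes the limit formula $\b(v/w)^{-1}=\lim_n\frac{1}{n}w(\fa(v,n))$ (Lemma~\ref{lembdiv}) to turn this into a multiplicative estimate like $\b(v/w)\le(1-A\|v-w\|)^{-1}$, and finally chains these together via $\b(v/v')\le\b(v/w)\,\b(w/w')\,\b(w'/v')$ to get Lipschitz continuity. Your route bypasses all of this: the uniform two-sided comparison $c_0\,\ord_0\le v\le C_0\,\ord_0$ on $\Delta$ together with the Lipschitz bound from Theorem~A lets you control the difference of ratios $v(f)/v'(f)$ directly and uniformly in $f$, so the supremum passes through immediately. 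What the paper's approach buys is a closer structural link to Corollary~D and to the invariant $\a_1$ via Proposition~\ref{p8}; what yours buys is a shorter, self-contained proof with an explicit Lipschitz constant $\max(A/c_0,\,AC_0/c_0^{2})$, and no need for the limit formula or the submultiplicativity of the linking number.

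One small remark: you do not actually need to invoke Izumi's theorem externally for the upper bound $C_0$. Since $\min_\Delta\chi_f=\widehat{\ord_0}(f)\le(N+1)\ord_0(f)$ by Lemma~\ref{L113} and Theorem~\ref{T106}(iv), Theorem~A alone already gives $v(f)\le\min_\Delta\chi_f+A\,\diam(\Delta)\,\ord_0(f)\le(N+1+A\,\diam(\Delta))\,\ord_0(f)$. This keeps your argument entirely internal to the paper's results.
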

The constant $\sup_{\fm_0} \frac{v}{v'}$ is sometimes referred to as the linking number 
of  two valuations, see~\cite{huckaba,samuel}.

In dimension $2$ over a smooth point, then $\a_2(v) = \vol (v)$
is equal to $(\sup_{\fm_0} v/ \ord_0)^{-1}$ by~\cite[Remark 3.33]{valtree} for any
 valuation normalized by $v(\fm_0) = +1$. In this case, we thus have
$$
\a_2(v) = \vol (v), \,\, \a_1(v) = \left(\sup_{\fm_0} \frac{v}{\ord_0}\right)^{-1},
\text{ and } \a_2(v) = \frac{\a_1(v)}{v(\fm_0)}~.
$$
Observe that since the function $\sup_{\fm_0} \frac{v}{\ord_0}$ is affine on (each segment of) the dual graph 
$\D$ by~\cite[\S 6]{valtree}, it follows that $\a_1$ and $\a_2$ are both differentiable functions on $\D$.

\bigskip
Our approach to Theorem~B follows~\cite{siminag}, where a similar
result was proved in a slightly different context.
The fact that $\chi$ is continuous, concave and piecewise affine
on the faces on $\D=\D(X,Z)$ is a direct consequence of the
way $\D$ is embedded into the set of valuations on
the function field on $X$.
After this observation the proof consists of two steps. 

First we give an upper bound for $\chi$ on the vertices of $\D$. 
Our argument for this uses elementary intersection theory and 
in fact is quite close to the original proof of Izumi's Theorem
by Izumi himself.

Second, we prove the Lipschitz estimate. Because of the concavity,
it suffices to bound certain directional derivatives of $\chi$ from
above. To do this, we first define a suitable simplicial subdivision
$\D'$ of $\D$ such that $\chi$ is affine on (a suitable subset of)
the faces of $\D'$. 
Using the toroidal techniques of~\cite{KKMS}, we can associate 
to $\D'$ a projective birational morphism $X'\to X$, where $X'$ 
is a normal, $\Q$-factorial variety. Roughly speaking, the directional
derivatives on $\D$ translate into actual differences on $\D'$,
and these can be estimated more or less as in the first step.

\medskip
One of our motivations behind this paper is to study pluripotential 
theory on Berkovich spaces~\cite{BerkBook} over a field equipped with a trivial norm.
The Lipschitz estimate in Theorem~B implies the compactness of
certain spaces of quasi-plurisubharmonic functions that appear
in~\cite{hiro,BdFF}.
These applications to pluripotential theory will appear
elsewhere; the corresponding results (including the Lipschitz estimate)
for a discretely valued field can be found in~\cite{siminag,nama}.
For more on (pluri)potential theory in a non-Archimedean setting,
see also~\cite{BRBook,thuphd,FR,valtree,dynberko}.

\bigskip
The paper is organized as follows.
In~\S\ref{S120} we recall some basic facts about valuations
in general and quasimonomial valuations in particular.
We also state a result that follows from~\cite{KKMS}.
In~\S\ref{S101} we recall some facts about Lipschitz constants for
convex functions. The proof of our main result, Theorem~B,
is then given in~\S\ref{S114} whereas its various consequences are 
established in~\S\ref{S121}. 
%
%%%%%%%%%%%%%%%%%%%%%%%%%%%%%%%%%%%%%%%%%%%%%%%%%%%%%%%%%%%%
%
\section{Background}\label{S120}
Throughout the paper, $k$ is an algebraically closed field.
By a variety over $k$ we mean a separated integral scheme of finite
type over $k$.
If $Z$ is a subscheme of a scheme $X$, we denote by $|Z|$ its support.
%
%%%%%%%%%%%%%%%%%%%%%%%%%%%%%%%%%%%%%%%%%%%%%%%%%%%%%%%%%%%%
%
\subsection{Valuations}\label{S1118}
Let $X$ be a normal, quasiprojective variety over $k$.
By a \emph{valuation on $X$} we mean a (rank 1) valuation 
$v:k(X)\to\R$ that is trivial on $k$ and admits a \emph{center} on $X$, 
that is, a point (not necessarily closed) $\xi\in X$ such that 
$v$ is nonnegative on the local ring $\cO_{X,\xi}$
and strictly positive on the maximal ideal of this ring.
Since $X$ is assumed separated, the center is unique if it exists.
We write $\Val_X$ for the set of all valuations on $X$.
For a closed point $0\in X$, we shall also denote by $\Val_{X,0}$
the subset of valuations $v\in \Val_X$ such that $v(\fm_0) >0$.

If $G$ is a $\Q$-Cartier divisor on $X$ and $v\in\Val_X$, then we
define $v(G):=\frac1mv(f_m)$, where $m\in\Z_{>0}$ is such that $mG$ is
a Cartier divisor and $f_m\in\cO_{X,\xi}$ is a local equation for $mG$
at the center $\xi$ of $v$ on $X$. If $G$ is effective, then $v(G)\ge0$
with strict inequality if and only if $\xi$ is contained in the support of $G$.
 
Consider a proper birational map $\pi:X'\to X$ with $X'$ normal.
If $E\subset X'$ is a prime divisor, then $\ord_E$, the order
of vanishing along $E$ defines an element of $\Val_{X'}=\Val_X$. 
Any valuation proportional to such a valuation will be called \emph{divisorial}.

%
%%%%%%%%%%%%%%%%%%%%%%%%%%%%%%%%%%%%%%%%%%%%%%%%%%%%%%%%%%%%
%
\subsection{Dual complexes}\label{S104}
Now assume $(X,Z)$ is an \emph{SNC pair}. 
By this we will mean that $X$ is a smooth, quasiprojective 
variety over $k$ and $Z\subset X$ is an effective divisor with projective, connected,  
simple normal crossing support such that any nonempty intersection of
irreducible components of $Z$ is connected.
Thus we can write $Z=\sum_{i\in I}b_iE_i$,
where $E_i$, $i\in I$ are the irreducible components of $|Z|$,
$b_i\in\Z_{>0}$
and, for any $J\subset I$,
the intersection $E_J:=\bigcap_{j\in J}E_j$ is either empty or irreducible.

The \emph{dual complex} $\D=\D(X,Z)$ is a simplicial complex defined in the 
usual way: to each $i\in I$ is associated a vertex $e_i$ and to each 
$J\subset I$ with $E_J\ne\emptyset$ is associated a simplex $\sigma_J$
containing all the $e_j$, $j\in J$.

Let $\Div(X,Z)\simeq\bigoplus_{i\in I}\Z E_i$ 
be the free abelian group of divisors on $X$ supported on $|Z|$.
Set $\Div(X,Z)_\R:=\Div(X,Z)\otimes_\Z\R\simeq\bigoplus_{i\in I}\R E_i$.
We can embed $\D$ in the dual vector space $\Div(X,Z)_\R^*$ as follows.
A vertex $e_i$ of $\D$ is identified with the element in $\Div(X,Z)_\R^*$ satisfying 
$\langle e_i,E_i\rangle=b_i^{-1}$ and
$\langle e_i,E_j\rangle=0$ for $i\ne j$.
A simplex $\sigma_J$ of $\D$ is identified with the 
convex hull of $(e_j)_{j\in J}$ in $\Div(X,Z)_\R^*$.
In this way, $\D$ can be written
\begin{equation*}
  \D=\left\{
    t=\sum_{i\in I}t_ie_i\
    \bigg|\ t_i\ge0, \sum_ib_it_t=1, \bigcap_{i\mid t_i>0}E_i\ne\emptyset
  \right\}
  \subset\Div(X,Z)^*_\R.
\end{equation*}
This embedding naturally equips $\D$ with an 
\emph{integral affine structure}: the integral affine functions are
the restrictions to $\D$ of the elements in $\Div(X,Z)$.
% 
%%%%%%%%%%%%%%%%%%%%%%%%%%%%%%%%%%%%%%%%%%%%%%%%%%%%%%%%%%%%
%
\subsection{Quasimonomial valuations}\label{S119}
We can also embed the dual complex $\D$ into the 
valuation space $\Val_X$. See~\cite[\S3]{jonmus}
for details on what follows.

Pick a point $t=\sum_{i\in I}t_ie_i\in\D\subset\Div(X,Z)^*_\R$, 
let $J$ be the set of indices $j\in I$ such that $t_j>0$
and let $\xi_J$ be the generic point of $E_J=\bigcap_{j\in J}E_j$.
Pick local algebraic coordinates $z_j\in\cO_{X,\xi_J}$, $j\in J$, 
such that $E_j=\{z_j=0\}$. We then associate to $t$ the valuation $\val_t$,
which is a monomial valuation in these coordinates with weight $t_j$ on $z_j$, $j\in J$.
More precisely, $\val_t$ is defined as follows.
Using Cohen's Theorem, we can write any $f\in\cO_{X,\xi_J}$
in the complete ring $\widehat{\cO_{X,\xi_J}}$ as a formal power series 
\begin{equation}\label{e107}
  f=\sum_{\a\in\N^J}f_\a z^\a.
\end{equation}
where $f_\a\in\widehat{\cO_{X,\xi_J}}$
and, for each $\a$, either $f_\a=0$ or $f_\a(\xi_J)\ne0$.
We then set
\begin{equation}\label{equ:monoval}
  \val_t(f):=\min\{\langle t,\alpha\rangle\mid f_\a\ne0\}.
\end{equation}
While the expansion~\eqref{e107} is
not unique, one can show that~\eqref{equ:monoval} is well defined.
Further, it suffices to take the minimum over finitely many $\a$.
If $t\in\D$, then the center of $\val_t$ on $X$ is the generic
point of $E_J$, where $J\subset I$ is defined by the property that 
$v$ lies in the relative interior of $\sigma_J$.
\begin{prop}\label{P102} 
  Let $(X,Z)$ be an SNC pair. Then, for any effective divisor $G$ on
  $X$, the function $v\mapsto v(G)$ is continuous, concave and integral piecewise
  affine on $\D=\D(X,Z)$.
\end{prop}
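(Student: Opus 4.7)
The plan is to analyze $v\mapsto v(G)$ one simplex at a time, rewriting the formula~\eqref{equ:monoval} as a finite minimum of integral affine functions via a Newton polyhedron, and then gluing the face-wise descriptions together.

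Fix $J\subset I$ with $E_J\neq\emptyset$ and, at the generic point $\xi_J$ of $E_J$, pick local algebraic coordinates $z_j$, $j\in J$, with $E_j=\{z_j=0\}$. Since $G$ is effective it admits a local defining equation $f\in\cO_{X,\xi_J}$, which I expand in the completion as in~\eqref{e107}: $f=\sum_{\a\in\N^J}f_\a z^\a$. I then introduce the Newton polyhedron
\[
\Nw(f,J)\;=\;\conv\{\a\in\N^J:\ f_\a\neq 0\}+\R_{\geq 0}^J,
\]
which has finitely many extremal points, all lying in the support of $f$. For any $t\in\sigma_J$ the linear functional $\langle t,\cdot\rangle$ is nonnegative on $\R^J_{\geq 0}$, so its infimum on $\Nw(f,J)$ coincides with its infimum on the support of $f$ and is attained at a vertex. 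Combined with~\eqref{equ:monoval} this gives
\[
\val_t(G)\;=\;\min\{\langle t,\a\rangle:\ f_\a\neq 0\}\;=\;\min_{\a\text{ vertex of }\Nw(f,J)}\langle t,\a\rangle.
\]

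The next step is to recognize each $t\mapsto\langle t,\a\rangle=\sum_{j\in J}\a_j t_j$ as an integral affine function on $\sigma_J$ in the sense of~\S\ref{S104}. Using the pairing $\langle t,E_j\rangle=t_j/b_j$ read off from the definition of the vertices $e_j$, one sees that this function equals the restriction to $\sigma_J$ of the integer divisor $\sum_{j\in J}\a_j b_j E_j\in\Div(X,Z)$. A finite minimum of integral affine functions is automatically concave, continuous, and integral piecewise affine, so the three required properties hold on every simplex $\sigma_J$.

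What remains, and what I expect to be the subtlest point, is the compatibility of these face-by-face descriptions. Because $\val_t$ is an intrinsically defined quasimonomial valuation depending only on $t\in\D$, any simplex $\sigma_J$ containing $t$ must yield the same value $\val_t(G)$; in particular, when $t$ lies on a subface $\sigma_{J'}\subset\sigma_J$ (some $t_j$ vanishing), the face-$J$ Newton polyhedron formula recovers the value obtained directly from coordinates at $\xi_{J'}$. I would import this well-definedness from~\cite[\S 3]{jonmus}. Once this compatibility is in hand, continuity within each simplex propagates to global continuity on $\D$, while concavity and the integer piecewise affine structure are face-local statements already established above.
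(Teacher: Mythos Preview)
Your proof is correct and follows essentially the same approach as the paper's: both deduce the face-wise properties directly from the formula~\eqref{equ:monoval}, with your Newton-polyhedron reduction making explicit the paper's remark that the minimum may be taken over finitely many~$\alpha$. The only notable difference is that the paper obtains global continuity in one stroke by invoking continuity of $v\mapsto v(G)$ on all of $\Val_X$, whereas you deduce it from face-compatibility via the well-definedness of $\val_t$; both routes are valid.
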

\begin{proof} 
  The function $v\mapsto v(G)$ is continuous on $\Val_X$, so its
  restriction to $\D$ is also continuous.
  Let $\sigma=\sigma_J$ be a face of $\D$, determined by a subset
  $J\subset I$ such that $E_J\ne\emptyset$.
  Let $\xi=\xi_J$ be the generic point of $E_J$
  and $f\in\cO_{X,\xi}$ a defining equation for $G$ at $\xi$.
  It then follows from~\eqref{equ:monoval} that $t\mapsto\val_t(f)$ 
  is continuous, piecewise integral affine and convex on $\sigma_J$.
 \end{proof}
The valuation $\val_t$ is divisorial if and only if
$t_j\in\Q$ for all $j$, see~\cite[Remark~3.9]{jonmus}.
In particular, the set of $t\in\D$ for which $\val_t$ is divisorial
is dense in $\D$.
%
%%%%%%%%%%%%%%%%%%%%%%%%%%%%%%%%%%%%%%%%%%%%%%%%%%%%%%%%%%%%
%
\subsection{Subdivisions and blowups}\label{S102}
A \emph{subdivision} $\D'$ of $\D=\D(X,Z)$ is a compact rational polyhedral 
complex of $\Div(X,Z)_\R^*$ refining $\D$. 
A subdivision $\D'$ is \emph{simplicial} if its faces are simplices.
It is \emph{projective} if there exists a convex, piecewise integral 
function $h$ on $\D$ such that 
$\D'$ is the coarsest subdivision of $\D$ 
on each of whose faces $h$ is affine.
(Such a function $h$ is called a support function for $\D'$.)
\begin{thm}\label{T102}
  Let  $\D'$ be a simplicial projective subdivision of $\D$. 
  Then there exists a projective birational morphism 
  $\rho:X'\to X$ with the following properties:
  \begin{itemize}
  \item[(i)]
    $\rho$ is an isomorphism on $X'\setminus|Z'|$,
    where $Z':=\rho^{-1}(Z)$;
  \item[(ii)] 
    $X'$ is normal, $Z'$ has pure codimension 1 and 
    every irreducible component of $Z'$ is $\Q$-Cartier;
  \item[(iii)] 
    the vertices $(e'_i)_{i\in I'}$ of $\D'$ are in bijection with the irreducible components 
    $(E'_i)_{i\in I'}$ of $Z'$: for each $i\in I'$, the center on $X'$ of $e'_i$ 
    is the generic point of $E'_i$;
  \item[(iv)]
    If $J'\subset I'$, then $E'_{J'}:=\bigcap_{j\in J'}E'_j$ is 
    nonempty if and only if the corresponding vertices  $e'_j$, $j\in J'$ of $\Delta'$ 
    span a face $\sigma'_{J'}$ of $\Delta'$;
    in this case, $E'_{J'}$ is normal, irreducible, of codimension $|J'|$,  
    and its generic point is the 
    center of $v$ on $X'$ for all $v$ in the relative interior of $\sigma'_{J'}$;
  \item[(v)] 
    for each $i\in I'$, the function $\D\ni v\to v(E'_i)\in\R$
   is affine on the simplices of $\D'$.
 \end{itemize}
\end{thm}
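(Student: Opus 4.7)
My plan is to realize $\rho : X' \to X$ as the toroidal birational modification associated, via the construction of~\cite{KKMS}, to the subdivision $\Delta'$. The SNC hypothesis on $(X, |Z|)$ makes $(X, X \setminus |Z|)$ a strict (simple) toroidal embedding, whose conical polyhedral complex $\Sigma$ is obtained by gluing the simplicial cones $\R^J_{\ge 0}$ attached to each nonempty stratum $E_J$. The dual complex $\Delta$ is naturally the affine section $\{\sum_i b_i t_i = 1\}$ of $\Sigma$, and its integral affine structure is exactly the one induced from $\Sigma$. The first step is therefore to lift $\Delta'$ to a rational simplicial subdivision $\Sigma'$ of $\Sigma$ by taking cones from the origin over the faces of $\Delta'$; projectivity of $\Delta'$, witnessed by a convex piecewise affine support function $h$ on $\Delta$, lifts by $1$-homogeneous extension to a strictly convex support function for $\Sigma'$.

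Once $\Sigma'$ is in hand, the main theorem of~\cite{KKMS} (Chapter II) produces a toroidal modification $\rho : X' \to X$ which is an isomorphism away from $|Z|$, giving~(i). The fact that each cone of $\Sigma'$ is simplicial makes every local toric model at a point of $X'$ $\Q$-factorial, yielding~(ii). The orbit-cone dictionary for toroidal embeddings identifies the rays of $\Sigma'$ with the irreducible components $E'_i$ of $Z' = \rho^{-1}(Z)$ in such a way that the center on $X'$ of the valuation attached to $e'_i$ is the generic point of $E'_i$, giving~(iii); the same dictionary identifies higher-dimensional simplices $\sigma'_{J'}$ of $\Delta'$ with strata $E'_{J'}$, each of which is irreducible, normal, of codimension $|J'|$, and is the closure of the center on $X'$ of every $v$ in the relative interior of $\sigma'_{J'}$, establishing~(iv). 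Projectivity of $\rho$ comes from the strict convexity of the chosen support function on $\Sigma'$.

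For~(v), fix a vertex $e'_i$ of $\Delta'$, a simplex $\sigma'_{J''}$ of $\Delta'$, and a point $v = \sum_{j \in J''} t'_j e'_j$ of $\sigma'_{J''}$. By~(iv), the center of $v$ on $X'$ is the generic point $\xi'_{J''}$ of $E'_{J''}$, at which one can choose local algebraic coordinates $z'_j$ with $E'_j = \{z'_j = 0\}$ for $j \in J''$. Setting $b'_j = \ord_{E'_j}(Z)$, the vertex $e'_j$ corresponds to the valuation $(b'_j)^{-1}\ord_{E'_j}$, so $v$ is monomial in the $z'_j$ with weight $t'_j/b'_j$ on $z'_j$. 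If $i \in J''$, then $z'_i$ is a local equation for $E'_i$ at $\xi'_{J''}$ and $v(E'_i) = t'_i/b'_i$; if $i \notin J''$, then $E'_i$ does not pass through $\xi'_{J''}$, so a local equation is a unit and $v(E'_i) = 0$. In both cases $v \mapsto v(E'_i)$ is affine in the barycentric coordinates on $\sigma'_{J''}$.

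I expect the main conceptual obstacle to be the translation between the two points of view on the combinatorial data, namely the dual complex with its fixed normalization $v(Z) = 1$ and the conical polyhedral complex of~\cite{KKMS}, together with the bookkeeping of the multiplicities $b_i$ and $b'_i$. Once this dictionary is set up, (i)--(iv) are essentially a direct invocation of the toroidal theory, and (v) reduces to the short monomial computation above at the generic point of the center of $v$ on $X'$.
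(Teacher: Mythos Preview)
Your approach is exactly the paper's: the authors simply observe that $(X\setminus|Z|\hookrightarrow X)$ is a toroidal embedding without self-intersection in the sense of~\cite[Chapter~II]{KKMS} and cite the toroidal dictionary there, so your more detailed unpacking of that dictionary is entirely in line with their proof.

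One point in your argument for~(v) deserves care. You write that at the generic point $\xi'_{J''}$ of $E'_{J''}$ one can choose local algebraic coordinates $z'_j$ with $E'_j=\{z'_j=0\}$, and then compute $v$ as a monomial valuation in these coordinates. But $X'$ is only normal and $\Q$-factorial, not smooth: for a simplicial cone that is not unimodular the local ring $\cO_{X',\xi'_{J''}}$ need not be regular, and the $E'_j$ are only $\Q$-Cartier, so they need not be cut out by single equations. The fix is immediate and stays within the toric model: choose $m\ge1$ so that $mE'_i$ is Cartier near $\xi'_{J''}$, take a local equation $f_m$, and use $v(E'_i)=\tfrac1m v(f_m)$; in the toric chart $f_m$ is a monomial and the same linearity in the barycentric coordinates $(t'_j)$ follows. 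With this adjustment your argument for~(v) goes through, and the rest of your proposal matches the paper.
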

Since $Z$ is a divisor with simple normal crossing singularities on an smooth ambient space, 
the inclusion $X \setminus Z \subset X$ is a toroidal embedding in the sense of~\cite[Chapter~II]{KKMS}.
This result is thus a consequence of the toroidal analysis in op. cit.
%
%%%%%%%%%%%%%%%%%%%%%%%%%%%%%%%%%%%%%%%%%%%%%%%%%%%%%%%%%%%%
%
\section{Some convex analysis}\label{S101}
In this section we note some basic facts about convex functions.
Let $V$ be a finite dimensional real vector space and $\tau\subset V$ a 
compact convex set containing at least two points. 
Denote by $\cE(\tau)$ the set of 
extremal points of $\tau$. 

Given a norm $\|\cdot\|$ on $V$, the Lipschitz constant 
of a continuous function $\f:\tau\to\R$ is defined as usual as
\begin{equation*}
\lip_\tau(\f):=\sup_{v\neq v'}\frac{|\f(v)-\f(v')|}{\|v-v'\|}\in [0,+\infty]
\end{equation*}
and its $C^{0,1}$-norm is then 
\begin{equation*}
\|\f\|_{C^{0,1}(\tau)}:=\|\f\|_{C^0(\tau)}+\lip_\tau(\f) ,
\end{equation*}
where $\|\f\|_{C^0(\tau)} := \sup_\tau |\f|$.
This quantity of course depends on the choice of $\|\cdot\|$, but since all norms on $V$ are equivalent, choosing another norm only affects the estimates to follow by an overall multiplicative constant. 
%
%%%%%%%%%%%%%%%%%%%%%%%%%%%%%%%%%%%%%%%%%%%%%%%%%%%%%%%%%%%%
%
\subsection{Directional derivatives}
Now let $\f:\tau\to\R$ be convex and continuous.
For $v,w\in\tau$ we define the
directional derivative of $\f$ at $v$ towards $w$ as
\begin{equation}\label{equ:der}
  D_v\f(w):=\left.\frac{d}{dt}\right|_{t=0_+}\f((1-t)v+tw);
\end{equation}
this limit exists by convexity of $\f$. 
\begin{lem}\label{lemusc}
For any fixed $w\in \tau$, the function $v \mapsto D_v\f (w)$ is upper semicontinuous. 
\end{lem}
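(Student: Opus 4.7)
The plan is to express $D_v\varphi(w)$ as an infimum of continuous functions of $v$, and then invoke the standard fact that an infimum of continuous functions is upper semicontinuous.

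The key observation is that, by convexity of $\varphi$, the one-variable function
\[
t \longmapsto \psi_v(t) := \varphi\bigl((1-t)v + tw\bigr)
\]
is convex on $[0,1]$. Hence the slope
\[
Q(v,t) := \frac{\varphi((1-t)v+tw)-\varphi(v)}{t}
\]
is nondecreasing in $t$ on $(0,1]$, and consequently
\[
D_v\varphi(w) = \lim_{t\to 0^+} Q(v,t) = \inf_{t\in(0,1]} Q(v,t).
\]
Here I am using the fact, which I would state explicitly, that for a convex function on an interval the difference quotient from a fixed endpoint is monotone.

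Next, for each fixed $t\in(0,1]$, the point $(1-t)v+tw$ lies in $\tau$ because $\tau$ is convex, so $Q(\cdot,t):\tau\to\R$ is well-defined and continuous (being a difference of two compositions of the continuous function $\varphi$ with affine maps, divided by the constant $t$). Therefore $v\mapsto D_v\varphi(w)$ is the pointwise infimum over $t\in(0,1]$ of a family of continuous functions on $\tau$, and hence is upper semicontinuous.

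I do not anticipate any real obstacle here: the only ingredients are the convexity of $\varphi$ (which gives both existence of the limit in~\eqref{equ:der} and its identification as an infimum) and continuity of $\varphi$ on $\tau$. One minor point worth flagging in the write-up is that $\varphi$ is assumed continuous on all of $\tau$, so $Q(\cdot,t)$ is genuinely continuous up to the boundary; without that one would have to be slightly more careful at boundary points $v\in\partial\tau$.
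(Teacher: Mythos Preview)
Your proof is correct and uses essentially the same idea as the paper: both rely on the fact that, by convexity, $D_v\varphi(w)$ equals the infimum over $t\in(0,1]$ of the continuous difference quotients $Q(v,t)$. The paper unpacks this into an explicit $\varepsilon$-argument (choosing $t$ so that $Q(v,t)$ is within $\varepsilon$ of $D_v\varphi(w)$, then using continuity of $Q(\cdot,t)$ and the bound $Q(v',t)\ge D_{v'}\varphi(w)$), whereas you invoke directly that an infimum of continuous functions is upper semicontinuous; the mathematical content is identical.
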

\begin{proof}
Fix $v$ and let $\e>0$.
Then there exists $0<t<1$ such that  
$$D_v \f(w) \ge \frac{\f(tw + (1-t)v) - \f(v)}{t} - \e~.$$
Since $\f$ is continuous, we have
$$\frac{\f((1-t)v + tw ) - \f(v)}{t}  \ge \frac{\f((1-t)v' + tw) - \f(v')}{t}  - \e$$
for any $v'$ close to $v$.
Now $\f$ is convex hence
$$\frac{\f((1-t)v' + tw ) - \f(v')}{t}  \ge D_{v'}\f(w),$$ and we conclude that 
$$
D_v\f(w) \ge D_{v'}\f(w) - 2\e
$$
for any $v'$ close to $v$.
This ends the proof.
\end{proof}

\begin{prop}\label{prop:lip}
  There exists $C>0$ such that every Lipschitz continuous 
  convex function $\f:\tau\to\R$ satisfies
  \begin{equation*}
    C^{-1}\|\f\|_{C^{0,1}(\tau)}
    \le \|\f\|_{C^0(\partial\tau)}
    +\sup_{e\in\cE(\tau),v\in\mathrm{int}(\tau)}\left|D_{\pi_e(v)}\f(e)\right|
    \le C\,\|\f\|_{C^{0,1}(\tau)}.
  \end{equation*}
  Here $\pi_e(v)\in\partial\tau$ is the 
  unique point in $\partial\tau$ such that $v\in[e,\pi_e(v)]$. 
\end{prop}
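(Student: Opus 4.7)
I treat the two inequalities separately. The right-hand inequality is immediate: $\|\f\|_{C^0(\partial\tau)}\le\|\f\|_{C^{0,1}(\tau)}$ trivially, and~\eqref{equ:der} combined with the Lipschitz bound gives $|D_{\pi_e(v)}\f(e)|\le\lip_\tau(\f)\cdot\|e-\pi_e(v)\|\le\diam(\tau)\,\lip_\tau(\f)$ for any $e\in\cE(\tau)$ and $v\in\mathrm{int}(\tau)$, so the middle term is bounded by $(1+\diam(\tau))\|\f\|_{C^{0,1}(\tau)}$.

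For the left-hand inequality I first control $\|\f\|_{C^0(\tau)}$. The maximum of the convex function $\f$ on the compact convex set $\tau$ is attained at an extreme point, so $\sup_\tau\f\le\|\f\|_{C^0(\partial\tau)}$. For the infimum at $v\in\mathrm{int}(\tau)$, I fix any $e\in\cE(\tau)$ and write $v=(1-t)\pi_e(v)+te$ for some $t\in(0,1)$; convexity of $\f$ along $[\pi_e(v),e]$ then yields
\[
\f(v)\ge\f(\pi_e(v))+t\,D_{\pi_e(v)}\f(e)\ge-\|\f\|_{C^0(\partial\tau)}-|D_{\pi_e(v)}\f(e)|,
\]
which bounds $\|\f\|_{C^0(\tau)}$ by the middle quantity.

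The main step is controlling $\lip_\tau(\f)$. For distinct $v_1,v_2\in\mathrm{int}(\tau)$ I extend $[v_1,v_2]$ to a maximal chord $[w_-,w_+]\subset\tau$ with $w_\pm\in\partial\tau$, set $L=\|w_+-w_-\|$ and $u=(w_+-w_-)/L$, and consider the convex function $g(s)=\f(w_-+su)$ on $[0,L]$. Since $g'$ is monotone non-decreasing, the slope on the sub-interval corresponding to $[v_1,v_2]$ satisfies
\[
\left|\frac{\f(v_2)-\f(v_1)}{\|v_2-v_1\|}\right|\le\max\bigl(|g'_+(0)|,|g'_-(L)|\bigr)=\tfrac{1}{L}\max\bigl(|D_{w_-}\f(w_+)|,|D_{w_+}\f(w_-)|\bigr).
\]
Expanding each $w_\pm$ as a convex combination of extreme points and invoking the sublinearity of $u'\mapsto\f'(w_\mp;u')$ produces the one-sided estimates $D_{w_\mp}\f(w_\pm)\le\max_i|D_{w_\mp}\f(e_i)|$, and each $D_{w_\mp}\f(e_i)$ is captured by the middle quantity of the proposition, after a short induction on $\dim\tau$ to handle pairs $(w_\mp,e_i)$ that lie in a common proper face of $\tau$.

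The principal obstacle I anticipate is producing the matching \emph{lower} bounds on $D_{w_\pm}\f(w_\mp)$, since sublinearity of the directional derivative yields only one-sided control. The plan is to exploit the convexity inequalities $g'_+(0)\le(g(L)-g(0))/L\le g'_-(L)$, which combined with the $C^0(\partial\tau)$ control of $g(0)=\f(w_-)$ and $g(L)=\f(w_+)$ allow one to trade between the two endpoint derivatives and $\|\f\|_{C^0(\partial\tau)}$ at the cost of a $1/L$ factor; the uniform positive lower bound on $L$ coming from the minimal width of $\tau$ then absorbs this factor into the final constant $C$.
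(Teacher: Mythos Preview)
The paper does not prove this proposition; it simply cites \cite[Lemma~A.2]{siminag}. So there is no ``paper's own proof'' to compare against, and the question is whether your sketch stands on its own.

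Your treatment of the right-hand inequality and of the $C^0$ bound is fine. The Lipschitz argument, however, has a genuine gap that appears \emph{before} the ``principal obstacle'' you flag. You write
\[
\left|\frac{\f(v_2)-\f(v_1)}{\|v_2-v_1\|}\right|\le\frac{1}{L}\max\bigl(|D_{w_-}\f(w_+)|,|D_{w_+}\f(w_-)|\bigr),
\]
and then bound $D_{w_-}\f(w_+)\le\max_i D_{w_-}\f(e_i)$ by sublinearity, with each $D_{w_-}\f(e_i)$ controlled by the middle quantity $M$. But this gives only $|\text{slope}|\le M/L$, and your claim that $L$ has ``a uniform positive lower bound coming from the minimal width of $\tau$'' is false: the minimal width bounds the \emph{maximum} chord length in each direction, not the minimum, and chords through interior points near $\partial\tau$ can be arbitrarily short (think of a short chord near a vertex of a triangle, or near the boundary of a disk). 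So the $1/L$ factor cannot be absorbed as you propose, and it is present already in the one-sided estimate, not only in the lower-bound step.

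Two further issues compound this. First, the secant inequalities $g'_+(0)\le(g(L)-g(0))/L\le g'_-(L)$ give an \emph{upper} bound on $g'_+(0)$ and a \emph{lower} bound on $g'_-(L)$; they do not supply the missing lower bound on $g'_+(0)$ or upper bound on $g'_-(L)$ that you need. Second, the ``short induction on $\dim\tau$'' does not close up: the analogue of $M$ for a proper face $\sigma$ involves directional derivatives $D_{w}\f(e)$ with $[w,e]\subset\sigma\subset\partial\tau$, and these are precisely the quantities \emph{excluded} from $M$ for $\tau$, so you have no inductive control over them.
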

Observe that $\sup\{ \left|D_{\pi_e(v)}\f(e)\right|, \,
e\in\cE(\tau),v\in\mathrm{int}(\tau)\}$ equals  $\sup \{ \left|D_w\f(e)\right|,\, 
e\in\cE(\tau),\, w\in\partial \tau,\, [w,e]\not\subset \partial \tau\}$.

For the proof, see~\cite[Lemma~A.2]{siminag}.
%
%%%%%%%%%%%%%%%%%%%%%%%%%%%%%%%%%%%%%%%%%%%%%%%%%%%%%%%%%%%%
%
\subsection{Newton polyhedra}\label{S115}
Assume now that $\tau\subset V$ is a compact polytope whose affine span 
$\langle\tau\rangle$ is an affine hyperplane 
that does not contain the origin of $V$. 
Let $\f:\tau\to\R$ be a piecewise affine continuous convex function. 
It extends as a $1$-homogeneous piecewise linear convex function 
on the polyhedral cone $\hat\tau$ over $\tau$, 
whose \emph{Newton polyhedron} $\Nw(\f)$ is as usual 
defined as the convex subset of $V^*$ consisting 
of all linear forms $m\in V^*$ such that 
$m\le\f$ on $\hat\tau$ (or, equivalently, on $\tau$).
We endow $V^*$ with the dual norm
$\|m\| := \sup_{\|v \| =1} \langle m,v \rangle$.

\begin{prop}\label{P114}
  There exists a constant $C>0$, not depending on $\f$, such that
  \begin{equation*}
    C^{-1}\,\|\f\|_{C^{0,1}(\tau)}\le\max_{m\in\cE_\tau(\f)}\|m\|\le C\,\|\f\|_{C^{0,1}(\tau)},
  \end{equation*}
  where $\cE_\tau(\f)\subset V^*$ denotes the (finite) set of extremal points 
  of the Newton polyhedron of $\f$.
\end{prop}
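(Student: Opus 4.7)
The plan is to identify the vertices of $\Nw(\f)$ with the linear pieces of the $1$-homogeneous extension of $\f$, and then estimate in both directions.

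\textbf{Setup.} Since $0\notin\langle\tau\rangle$, every ray from the origin through $\hat\tau$ meets $\tau$ exactly once, so the $1$-homogeneous extension $\tilde\f$ of $\f$ to $\hat\tau$ is well defined, piecewise linear and convex. Hence there exist finitely many linear forms $m_1,\dots,m_N\in V^*$, one for each top-dimensional linearity domain of $\tilde\f$, such that $\tilde\f=\max_i m_i$ on $\hat\tau$. Each $m_i$ lies in $\Nw(\f)$ by construction, and since any vertex of $\Nw(\f)$ is characterized by coinciding with $\tilde\f$ on a full-dimensional subset of $\hat\tau$, we obtain $\cE_\tau(\f)=\{m_1,\dots,m_N\}$ after dropping any $m_i$ dominated by the others on $\hat\tau$.

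\textbf{Right inequality.} From $\f=\max_i m_i$ on $\tau$, for any $v,v'\in\tau$ one has $|\f(v)-\f(v')|\le\max_i\|m_i\|\cdot\|v-v'\|$ and $|\f(v)|\le\max_i\|m_i\|\cdot\|v\|$. Since $\tau$ is compact, this yields $\|\f\|_{C^{0,1}(\tau)}\le C\max_{m\in\cE_\tau(\f)}\|m\|$ for some $C$ depending only on $\tau$ and the chosen norm.

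\textbf{Left inequality.} Conversely, fix $m\in\cE_\tau(\f)$ and pick $v_0$ in the (nonempty) open region $U\subset\tau$ where $\f$ coincides with $m$. For $v\in U$ near $v_0$, the vector $v-v_0$ lies in the linear hyperplane $H_0:=\langle\tau\rangle-v_0$ and satisfies $m(v-v_0)=\f(v)-\f(v_0)$, so the operator norm of $m|_{H_0}$ is at most $\lip_\tau(\f)$. Moreover $|m(v_0)|=|\f(v_0)|\le\|\f\|_{C^0(\tau)}$. Because $0\notin\langle\tau\rangle$, the decomposition $V=H_0\oplus\R v_0$ is direct, and equivalence of norms on the finite-dimensional space $V^*$ gives $\|m\|_{V^*}\le C'\bigl(\|m|_{H_0}\|+|m(v_0)|\bigr)\le C'\|\f\|_{C^{0,1}(\tau)}$.

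\textbf{Main obstacle.} The conceptual heart is the identification of $\cE_\tau(\f)$ with the linear pieces of $\tilde\f$; the rest is routine linear algebra. The hypothesis that $\langle\tau\rangle$ avoids the origin is essential in the lower bound, as it allows control of $m$ as an element of $V^*$ by pinning down its value at a single point $v_0\in\tau$ via the identity $m(v_0)=\f(v_0)$; without this one could only bound $m$ up to a linear form vanishing on $H_0$.
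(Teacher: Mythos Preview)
Your approach is essentially the paper's: identify $\cE_\tau(\f)$ with the linear pieces of $\f$, and control $m\in V^*$ via its restriction to the linear hyperplane $H_0$ parallel to $\langle\tau\rangle$ (the paper calls it $W$) together with its value at a point of $\tau$. Your labels ``right'' and ``left'' are swapped relative to the displayed inequality, but that is cosmetic.

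There is one point that needs fixing. In the step where you bound $\|m\|$ by $C'\bigl(\|m|_{H_0}\|+|m(v_0)|\bigr)$, you appeal to equivalence of norms on $V^*$. But the norm $m\mapsto\|m|_{H_0}\|+|m(v_0)|$ depends on $v_0$, and you chose $v_0$ depending on $\f$ (and on the particular extremal point $m$); so as written, $C'$ depends on $\f$, contrary to the statement. The fix is to observe that $C'$ can be taken uniform over $v_0\in\tau$: since $\tau$ is compact and disjoint from the linear hyperplane $H_0$, the continuous function $(m,v)\mapsto\|m|_{H_0}\|+|m(v)|$ is bounded below by a positive constant on the compact set $\{\|m\|=1\}\times\tau$. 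The paper packages this as the single inequality $\|m\|\le C\bigl(\|m|_W\|+\inf_{v\in\tau}|\langle m,v\rangle|\bigr)$, with $C$ depending only on $\tau$, which sidesteps the issue of the $\f$-dependent choice of $v_0$.
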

\begin{proof} 
  Since $\tau$ is a non-empty compact subset disjoint from the linear hyperplane 
  $W$ parallel to $\langle\tau\rangle$, it is clear by homogeneity 
  that there exists $C>0$ such that 
  \begin{equation}\label{equ:normes}
    \left\|m\right\|\le C\left(\left\|m|_W\right\|
      +\inf_{v\in\tau}|\langle m,v\rangle|\right)
  \end{equation}
  for all $m\in M_\R$. On the other hand, elementary convex analysis tells us that
  \begin{equation}\label{equ:fmax}
    \f(v)=\max_{m\in\cE_\tau(\f)}\langle m,v\rangle
  \end{equation}
  for all $v\in\tau$, and that the set $\{v\in\tau\mid\f(v)=\langle m,v\rangle\}$ 
  has non-empty interior in $\tau$ for each $m\in\cE_\tau(\f)$. 
  We thus see that the image of the gradient of $\f$ on its differentiability locus 
  is exactly the finite set
  $\{m|_W\mid m\in\cE_\tau(\f)\}\subset W^*$, which implies that the 
  Lipschitz constant of $\f$ on $\tau$ satisfies
  \begin{equation*}
  \lip_\tau(\f)=\max_{m\in\cE_\tau(\f)}\left\|m|_W\right\|. 
  \end{equation*}
Since $\| \f \|_{C^{0}(\tau)} \le \max_\tau \|v\| \,  \max_{\cE_\tau(\f)} \| m \|$, and $\| m|_W\| \le \|m \|$,
the left-hand inequality is now clear. Since for each $m\in\cE_\tau(\f)$ 
  there exists $v\in\tau$ such that $\f(v)=\langle m,v\rangle$, we have 
  $\inf_{v\in\tau}|\langle m,v\rangle|\le\|\f\|_{C^0(\tau)}$, 
  and~\eqref{equ:normes} yields the right-hand inequality.
\end{proof}
%
%%%%%%%%%%%%%%%%%%%%%%%%%%%%%%%%%%%%%%%%%%%%%%%%%%%%%%%%%%%%
%
\section{Proof of Theorem B}\label{S114}
Write $Z=\sum_{i\in I}b_iE_i$.
Fix a line bundle $M$ on $X$ that is ample on $Z$ and set
\begin{equation*}
  \theta_G:=\max_{J\subset I}|(G\cdot E_J\cdot M^{n-|J|-1})|.
\end{equation*}
Note that $\theta_G=0$ if the line bundle $\cO_X(G)|_Z$ is trivial.

Throughout the proof, $A\ge1$ and $B\ge0$ will denote various 
constants whose values may vary from line to line,
but they do not depend on $G$.

We already know from Proposition~\ref{P102} that the function
$\chi=\chi_G$ is nonnegative, concave and integral 
 piecewise affine on each simplex in $\D$.
%
%%%%%%%%%%%%%%%%%%%%%%%%%%%%%%%%%%%%%%%%%%%%%%%%%%%%%%%%%%%%
%
\subsection{Bounding the values on vertices} 
We first prove the estimate
\begin{equation}\label{e102}
  \chi(e_i)\le A\min_\D\chi+B\theta_G
  \quad\text{for all $i\in I$}.
\end{equation}
Since $\chi$ is concave on each simplex,
its minimum on $\D$ must be attained at a vertex. 
Further, the $1$-skeleton of $\D$ is connected since $Z$ has connected support. 
We may therefore assume that $I=\{0,1,\dots,m\}$, where
$\chi(e_0)=\min\chi$, 
$e_i$ is adjacent to $e_{i+1}$ (\ie $E_i\cap E_{i+1}\ne\emptyset$) 
for $i=0,\dots, l-1$
and $\chi(e_l)=\max_{i\in I}\chi$, where $1\le l\le m$.
It suffices to prove~\eqref{e102} for $1\le i\le l$ and
this we shall do by induction.
Let us write
\begin{equation*}
  G=\sum_{j\in I}b_j\chi(e_j)E_j+\tilde{G},
\end{equation*}
where $\tilde{G}$ is an effective divisor whose support 
does not contain any $E_i$.
For each $i\in I$ we then have
\begin{align}
  \sum_{j\in I}b_j\chi(e_j)(E_i\cdot E_j\cdot M^{n-2})
  &=(G\cdot E_i\cdot M^{n-2}) 
  -(\tilde{G}\cdot E_i\cdot M^{n-2})\notag\\
  &\le (G\cdot E_i\cdot M^{n-2})\notag\\
  &\le\theta_G.\label{e101} 
\end{align}
Set 
\begin{equation*}
  c_{ij}:=b_j(E_i\cdot E_j\cdot M^{n-2})
\end{equation*}
for $i,j\in I$.
Note that $c_{ij}\ge 0$ for $j\neq i$,
with strict inequality if and only if $e_i$ and $e_j$ are adjacent.
In particular, $c_{i,i+1}>0$ for $0\le i<l$.
Since $\chi\ge0$ it follows from~\eqref{e101} that 
\begin{align*}
  \chi(e_{i+1})
  &\le 
  \frac{|c_{ii}|}{c_{i,i+1}}\chi(e_i)
 +\frac{\theta_G}{c_{i,i+1}}\\
  &\le A_0\chi(e_i)+B_0\theta_G,
\end{align*}
for $0\le i<l$, where the constants $A_0$ and $B_0$ do not depend on
$i$ or $G$.
We may assume that $A_0\ge 1$.
A simple induction now gives~\eqref{e102}
for $0\le i< l$, with $A=A_0^l$ and $B=B_0(1+A_0+\dots+A_0^{l-1})$.
%
%%%%%%%%%%%%%%%%%%%%%%%%%%%%%%%%%%%%%%%%%%%%%%%%%%%%%%%%%%%%
%
\subsection{Bounding Lipschitz constants} 
Let $\tau$ be a face of $\D$. 
Our aim is to prove by induction on $\dim\tau$ that
\begin{equation*}
  \|\chi|_\tau\|_{C^{0,1}}
  \le A\min_\D\chi+B\theta_G.
\end{equation*}
Here the $C^{0,1}$-norm is defined as the sum of 
the sup-norm and the Lipschitz constant; see~\S\ref{S101}.

The case $\dim\tau=0$ is settled 
by~\eqref{e102} so let us assume that $\dim\tau>0$.
By Proposition~\ref{P102} the restriction of $\chi$ to $\tau$ 
is piecewise affine and concave. 
It therefore admits directional derivatives, and we set as in~\eqref{equ:der}
\begin{equation*}
  D_v\chi(w):=\left.\frac{d}{dt}\right|_{t=0+}\chi\left((1-t)v+tw\right)
\end{equation*}
for $v,w\in\tau$. 

Let us say that a codimension $1$ face of $\tau$ is \emph{opposite} a vertex when it is the convex hull of the remaining vertices of $\tau$. This notion is well-defined since $\tau$ is a simplex. 
\begin{prop}\label{prop:keyprop}
  We have 
  \begin{equation*}
    D_v\chi(e) \le A\min_\D\chi+B\theta_G
  \end{equation*}
  for any vertex $e$ of $\tau$, and any rational point 
  $v$ in the relative interior of the face $\sigma$ of $\tau$ 
  opposite to $e$, such that $\chi|_\sigma$ is affine near $v$.
\end{prop}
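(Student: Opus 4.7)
The plan is to convert $D_v\chi(e)$ into an actual difference of values of $\chi$ at two adjacent rational points of a suitable simplicial refinement of $\Delta$, and then to control that difference by adapting the intersection-theoretic argument of \S4.1 on an associated birational modification $\rho:X'\to X$.

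First I would construct a rational, projective, simplicial subdivision $\Delta'$ of $\Delta$ having $v$ as a vertex, on which $\chi$ is affine on every face meeting a neighborhood of $v$ in $\tau$, and containing a vertex $e''$ that is adjacent to $v$ in $\Delta'$ and lies on the open segment $(v,e]$. Such a $\Delta'$ can be built by first taking any subdivision refining the breaks of $\chi$ (which are rational since $\chi$ is integral piecewise affine by Proposition~\ref{P102}), then barycentrically refining so that $v$ and a well-chosen rational point of $(v,e]$ both become vertices, and finally arranging projectivity via a convex support function as in \S2.4. Applying Theorem~\ref{T102} to $\Delta'$ yields a projective birational morphism $\rho:X'\to X$ with $X'$ normal and $\Q$-factorial, together with $\Q$-Cartier prime divisors $E'_i\subset X'$ indexed by the vertices $e'_i$ of $\Delta'$, adjacency corresponding to non-emptiness of intersection. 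Write $E':=E'_v$ and $E'':=E'_{e''}$.

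Since $\chi$ is affine on the edge $[v,e'']\subset\Delta'$, the definition~\eqref{equ:der} gives directly
$$D_v\chi(e)=s^{-1}\bigl(\chi(e'')-\chi(v)\bigr),$$
where $s\in(0,1]$ is defined by $e''=v+s(e-v)$. It therefore suffices to bound $\chi(e'')-\chi(v)$ from above by $s\bigl(A\min_\Delta\chi+B\theta_G\bigr)$. To do this I would replay the intersection-theoretic identity of \S4.1 on $X'$ with $\rho^*M$ in place of $M$. Decomposing $\rho^*G=\sum_{i\in I'}b'_i\chi(e'_i)E'_i+\tilde G'$, with $\tilde G'$ effective and of support disjoint from $|Z'|$, and pairing with $E'\cdot(\rho^*M)^{n-2}$, one obtains a linear relation among the values $\chi(e'_j)$ at $v$ and its neighbors in $\Delta'$. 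The off-diagonal coefficients are non-negative by nefness of $\rho^*M$ on $Z'$, the coefficient of $\chi(e'')$ is strictly positive thanks to the appropriate positivity of $\rho^*M$ on the codimension-two subvariety $E'\cap E''$, and the right-hand side $(\rho^*G\cdot E'\cdot(\rho^*M)^{n-2})$ pushes down to $X$ by the projection formula and is bounded by a constant times $\theta_G$. Isolating $\chi(e'')$ and combining with $\chi(v)\ge\min_\Delta\chi$ then produces the required bound.

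The delicate point, and the main obstacle, is uniformity of the constants $A,B$ in $G$. The subdivision $\Delta'$ and the modification $X'$ both depend on $\chi=\chi_G$, so intersection numbers on $X'$ involving exceptional components can a priori blow up as $\Delta'$ is refined near $v$. The key observation is that this blow-up must be exactly compensated by the factor $s^{-1}$ appearing in the rewriting of the derivative as a difference: verifying this cancellation by pushing intersections down to $X$ via the projection formula, and extracting a bound that depends only on the intersection data $(M^a\cdot E_J)$ on $X$ and on the chosen metric on $\Delta$, is the substantive technical content to be checked.
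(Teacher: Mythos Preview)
Your approach has a genuine gap that makes it fail as soon as $\dim\tau\ge 2$. You make $v$ a \emph{vertex} of the subdivision $\Delta'$, so the corresponding prime divisor $E'=E'_v$ on $X'$ has codimension~$1$. But the center of $v$ on $X$ is the generic point of $E_J$, which has codimension $p=|J|=\dim\tau$. Hence $\rho$ contracts $E'$ to something of dimension $n-p$, and likewise contracts $E'\cap E''$ (which has dimension $n-2$) into $E_{J\cup\{i_0\}}$, of dimension $n-p-1$. When $p\ge 2$ the projection formula then forces
\[
\bigl(E'\cdot E''\cdot(\rho^*M)^{n-2}\bigr)=\bigl((\rho^*M)^{n-2}\cdot(E'\cap E'')\bigr)=0,
\]
since $\rho^*M$ is pulled back from a class on a variety of dimension $<n-2$. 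The coefficient you need to be strictly positive in order to isolate $\chi(e'')$ therefore vanishes, and no cancellation with the factor $s^{-1}$ can rescue the argument. The same degeneracy kills the right-hand side $(\rho^*G\cdot E'\cdot(\rho^*M)^{n-2})$, so the inequality you derive is $0\le 0$ and carries no information.

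The paper circumvents this by a different construction: it does \emph{not} turn $v$ into a vertex. Instead it builds $\Delta'=\Delta'(\varepsilon,v)$ by uniformly scaling $\sta_\Delta(\sigma)$ towards $v$, so that $v$ still lies in the relative interior of a face $\sigma'_J$ of the \emph{same} dimension $p-1$. Then $E'_J\to E_J$ is generically finite, and one intersects on $E'_J$ with $(\rho^*M)^{n-p-1}$ rather than $(\rho^*M)^{n-2}$. The crucial step is the linearization identity (Lemma~\ref{L101}),
\[
\rho^*\Bigl(\chi(v)\,Z+\sum_{j\in L}D_v\chi(e_j)\,b_jE_j\Bigr)\Big|_{E'_J}
=\sum_{j\in L}\chi(e'_j)\,b'_jE'_j\Big|_{E'_J},
\]
which packages all the directional derivatives at once and, via the projection formula, pushes the whole inequality down to intersection numbers $\bigl(E_j\cdot E_J\cdot M^{n-p-1}\bigr)$ on $X$. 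These are manifestly independent of $\varepsilon$, of $v$, and of $G$, so the uniformity of $A,B$ is automatic rather than a delicate cancellation to be checked. If you want to repair your argument, the minimal change is to keep $v$ interior to a face of dimension $\dim\sigma$ and to intersect with the correct power $(\rho^*M)^{n-p-1}$; but doing so essentially leads you to the paper's construction.
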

Granting this result, let us explain how to conclude the proof of Theorem~C.
By induction we have $\sup_{\partial\tau}\chi\le A\min_\D\chi+B\theta_G$.
The fact that $\chi$ is concave and nonnegative %immediately 
implies that
\begin{equation*}
  D_v\chi(e)\ge\chi(e)-\chi(v)\ge-\min\chi
\end{equation*}
for any  $e,v \in \partial \tau$.
By Proposition~\ref{prop:keyprop} this gives 
(assuming, as we may, that $A\ge1$)
\begin{equation*}\label{eqbd}
  |D_v\chi(e)| \le A|\min_\D\chi|+B\theta_G \tag{\dag}
\end{equation*}
for any vertex $e$ of $\tau$ and any rational point $v$ 
in the relative interior of the face $\sigma$ opposite to $e$ such that 
$\chi|_\sigma$ is affine near $v$. 

By Lemma~\ref{lemusc} applied to the convex function $-\chi$, 
the function $v \mapsto D_v\chi(e)$ is lower semicontinuous on $\sigma$.
It follows  by density that the upper bound~\eqref{eqbd} holds 
for any $v$ in the relative interior of $\sigma$. 
We conclude by Proposition~\ref{prop:lip}
that the $C^{0,1}$-norm of $\chi|_\tau$ is bounded by $A\min_\D\chi+B\theta_G$,
completing the proof of Theorem~C.

The rest of~\S\ref{S114} is devoted to the proof of Proposition~\ref{prop:keyprop}.
%
%%%%%%%%%%%%%%%%%%%%%%%%%%%%%%%%%%%%%%%%%%%%%%%%%%%%%%%%%%%%
%
\subsection{Special subdivisions}\label{S103}
The \emph{star} of a face $\sigma$ of $\D$ is defined as usual as the 
subcomplex $\sta(\sigma)$ of $\D$ made up of all the faces of $\D$ containing $\sigma$. 
A vertex $e_i$ thus belongs to the star of a simplex $\sigma_J$ iff
$E_i$ intersects $E_J$.

We shall need the following construction, see Figure~\ref{F101}.
Let $\sigma=\sigma_J$ be a face of $\D$ and 
$L\subset I$ the set of vertices of $\D$ contained in
$\sta_\D(\sigma)$. Thus $j\in L$ if and only if $E_j\cap E_J\ne\emptyset$.
Consider a rational point $v$ in the relative interior  of $\sigma$.
Given $0<\e<1$ rational and $j\in L$ set $e_j^\e:=\e e_j+(1-\e)v$. 
We shall define a projective simplicial subdivision $\D'=\D'(\e,v)$ of $\D$.

To define $\D'$, first consider a polyhedral subdivision $\D^\e=\D^\e(v)$ 
of $\D$ leaving the complement of $\sta_\D(\sigma)$ unchanged. 
The set of vertices of $\D^\e$ is precisely 
$(e_i)_{i\in I}\cup (e_j^\e)_{j\in L}$. The faces of 
$\D^\e$ contained in $\sta(\sigma)$ are of the following two types:
\begin{itemize}
\item
  if the convex hull $\Conv(e_{j_1}, \dots ,  e_{j_m})$ is a face of $\D$ 
  containing $\sigma$, then
  $\Conv( e_{j_1}^\e, \dots ,  e_{j_m}^\e)$ is a face of $\D^\e$;
\item
  if  $\Conv( e_{j_1}, \dots ,  e_{j_m})$ is a face of $\D$ contained in $\sta(\sigma)$ 
  but not containing $\sigma$, then both
  $\Conv( e_{j_1},\dots ,  e_{j_m})$ and $\Conv( e_{j_1}, \dots,e_{j_m},e_{j_1}^\e,\dots,e_{j_m}^\e)$ 
  are faces of $\D^\e$.
\end{itemize}
In a neighborhood of $v$, note that the subdivision $\D^\e$
is obtained by scaling $\D$ by a factor $\e$.
More precisely,  consider the affine map $\psi^\e:\sta(\sigma)\to\sta(\sigma)$ 
defined by $\psi^\e (w) = \e w+(1-\e)v$.
Then $\sigma^\e:=\psi^\e (\sigma)$ is the face of $\D^\e$ containing 
$v$ in its relative interior, and $\psi^\e(\sta_{\D}(\sigma))=\sta_{\D^\e} (\sigma^\e)$.
In particular, even though $\D^\e$ is not simplicial in general, 
all faces of $\D^\e$ containing $\sigma^\e$ are simplicial.

We claim that $\D^\e$ is projective. 
To see this, write $v=\sum_{j\in J}s_je_j$, with $s_j>0$ rational 
and $\sum s_j=1$. For $j\in J$, define a linear function $\lambda_j$ 
on $\sum_{i\in I}\R_+e_i\supset\D$ by $\lambda_j(\sum t_ie_i)=-t_j/s_j$
and set $h=\max\{\max_{j\in J}\lambda_j,-(1-\e)\}$. 
A suitable integer multiple of $h$ `
is then a strictly convex 
support function for $\D^\e$ in the sense of~\S\ref{S102}.

Now define $\D'=\D'(\e)$ as a simplicial subdivision of 
$\D^\e$ obtained using repeated barycentric subdivision
in a way that leaves $\sta_{\D^\e}(\sigma^\e)$ unchanged. 
By~\cite[pp.115--117]{KKMS}, $\D'$ is still projective.

Note that $\sigma':=\sigma^\e$ is the face of $\D'$ 
containing $v$ in its relative interior, 
For $j\in L$ set $e'_j=e^\e_j$. These are the vertices of $\D'$ contained
in $\sta_{\D'}(\sigma')$.
\begin{figure}[h]
\begin{center}
  \includegraphics[width=10cm]{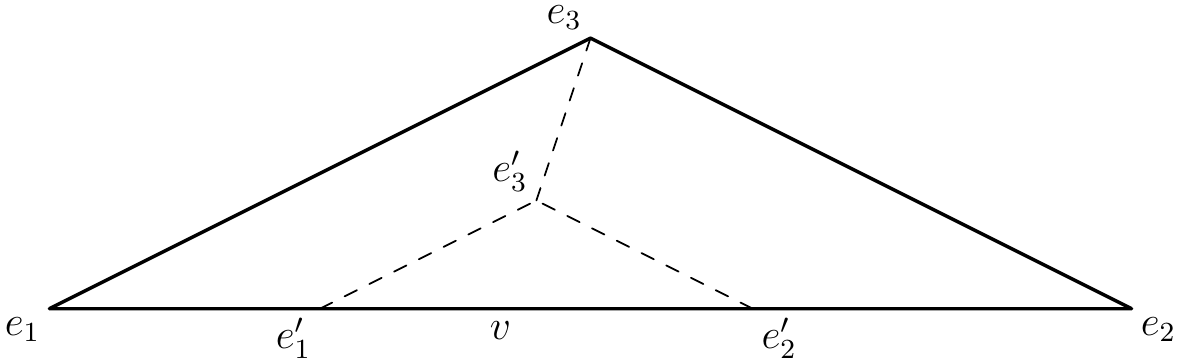}
    \end{center}
    \caption{The subdivision of~\S\ref{S103}.
      Here $v$ lies in the relative interior of the simplex $\sigma$ of $\D$
      with vertices $e_1$ and $e_2$. 
      The picture shows the intermediate subdivision $\D^\e$, where
      $v$ lies in the relative interior of the simplex $\sigma'$ with 
      vertices $e'_1$ and $e'_2$.
      The final subdivision $\D'$ is obtained from $\D^\e$ by barycentric
      subdivision of the quadrilaterals
      $\Conv(e_1,e_3,e'_1,e'_3)$ and $\Conv(e_2,e_3,e'_2,e'_3)$}\label{F101}
  \end{figure}
%
%%%%%%%%%%%%%%%%%%%%%%%%%%%%%%%%%%%%%%%%%%%%%%%%%%%%%%%%%%%%
%
\subsection{Proof of Proposition~\ref{prop:keyprop}}
Let $I$ be the set of vertices in $\D$, let
$L\subset I$ be the set of vertices contained in $\sta_\D(\sigma)$
and $J\subset L$ the set of vertices of $\sigma$.
Thus $\sigma=\sigma_ J$. 
The irreducible subvariety $E_J$ has codimension $|J|=p\ge1$.

Consider the simplicial projective subdivision $\D'=\D'(\e)$ 
constructed in~\S\ref{S103}.
For $j\in L$, $e'_j:=\e e_j+(1-\e)v$ is a vertex of $\D'$.
Recall that $\sigma'=\sigma'_J$ is the face of $\D'$ containing 
  $v$ in its relative interior.
  Since $\chi|_\sigma$ is assumed affine in a neighborhood of 
  $v$,  we may choose $\e>0$ small enough that:
  \begin{itemize}
  \item 
    $\chi$ is affine on $\sigma'\subset\sigma$
  \item 
    $\chi$ is affine on each segment $[v,e'_j]$, $j\in L$. 
  \end{itemize}
  
  Let $\rho:X'\to X$ be the birational morphism corresponding to the 
  subdivision $\D'$ of $\D$ as in Theorem~\ref{T102}. 
  Note that $\rho$ induces a generically finite map $E'_J\to E_J$ of projective $k$-varieties.
  Indeed, $E_J$ (resp.\ $E'_J$) is the closure of the center of $v$
  on $X$ (resp.\ $X'$), and both have codimension $|J|=p$ 
  in view of Theorem~\ref{T102}. 
  
  The following result 
  allows us to ``linearize'' the problem under consideration:
  \begin{lem}\label{L101} 
    We have
    \begin{equation*}
      \rho^* \left( \chi(v) Z + \sum_{j\in L} D_v\chi (e_j) \,b_j E_j \right)\Bigg|_{E'_J}
     =\sum_{j\in L}\chi (e'_j) \,b'_j E'_j\Bigg|_{E'_J}
    \end{equation*}
    in $\Pic(E'_J)_\Q$, where we have set $b'_j:=\ord_{E'_j}(Z)$. 
  \end{lem}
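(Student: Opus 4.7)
My plan is to prove the lemma by reducing the claimed equality of $\Q$-line bundles on $E'_J$ to a coefficient-by-coefficient identity for the divisor $\rho^* D$ on $X'$, where I set $D := \chi(v)Z + \sum_{k \in L} D_v\chi(e_k)\,b_k E_k$. By Theorem~\ref{T102}(iv) applied to $\D'$, a component $E'_i$ of $Z'$ meets $E'_J$ precisely when $e'_i$ lies in $\sta_{\D'}(\sigma')$; and by the construction of $\D'$ in~\S\ref{S103}, the vertex set of this star is exactly $\{e'_j \mid j \in L\}$. Every other component $E'_i$ restricts to the trivial line bundle on $E'_J$, so the lemma reduces to showing that for each $j \in L$,
\begin{equation*}
\ord_{E'_j}(\rho^* D) = b'_j\,\chi(e'_j),
\end{equation*}
which, via $\ord_{E'_j}(\rho^*\cdot) = b'_j\,\val_{e'_j}(\cdot)$, becomes the scalar identity $\val_{e'_j}(D) = \chi(e'_j)$.

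The next step unpacks the left-hand side. Write $v = \sum_{k \in J} s_k e_k$. The defining relation $e'_j = \e e_j + (1-\e)v$ in $\Div(X,Z)^*_\R$ makes $\val_{e'_j}$ a convex combination of $\val_{e_j}$ and $\val_v$, and the explicit pairings $\val_{e_j}(E_k) = \delta_{jk}/b_k$ and $\val_v(E_k) = s_k/b_k$ (with $s_k = 0$ for $k \notin J$) give
\begin{equation*}
\val_{e'_j}(D) \;=\; \chi(v) + \e\,D_v\chi(e_j) + (1-\e)\sum_{k \in J} s_k\,D_v\chi(e_k).
\end{equation*}
On the other hand, affineness of $\chi$ along the segment $[v, e'_j]$ yields $\chi(e'_j) = \chi(v) + \e\,D_v\chi(e_j)$, so the lemma reduces to the vanishing
\begin{equation*}
\sum_{k \in J} s_k\,D_v\chi(e_k) = 0.
\end{equation*}

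This vanishing is the crux of the argument. It relies on the \emph{other} affineness hypothesis: since $\chi$ is affine on the small simplex $\sigma'$, I can write $\chi - \chi(v) = L(\,\cdot\, - v)$ on $\sigma'$ for a linear form $L$, and since the initial segment of $[v, e_k]$ lies in $\sigma'$ for each $k \in J$, this gives $D_v\chi(e_k) = L(e_k - v)$. Combined with the normalization identity $\sum_{k \in J} s_k = 1$—which follows from $\val_v(Z) = 1$ since $\val_{e_l}(Z) = 1$ for every vertex $l$—this yields $\sum_k s_k\bigl(L(e_k) - L(v)\bigr) = L(v) - L(v) = 0$. The main obstacle is keeping straight the distinct roles of the two affineness assumptions available from the construction in~\S\ref{S103}: affineness along the radial segments $[v, e'_j]$ supplies the linear growth $\e\,D_v\chi(e_j)$, while affineness on the small face $\sigma'$ is what annihilates the remaining transverse sum, and both inputs are essential.
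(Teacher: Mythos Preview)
Your proof is correct and follows essentially the same route as the paper's own argument: both compute the pullback of each $b_iE_i$ (equivalently, the coefficients $\val_{e'_j}(D)$), use the affineness of $\chi$ on $[v,e'_j]$ to rewrite $D_v\chi(e_j)$ as $\e^{-1}(\chi(e'_j)-\chi(v))$, and invoke the affineness of $\chi$ on $\sigma'$ to kill the residual term. Your reduction to the single scalar vanishing $\sum_{k\in J}s_k\,D_v\chi(e_k)=0$ is a slightly cleaner packaging of what the paper writes as $\sum_{i\in J}s_i\chi(e'_i)=\chi(v)$, but the content is identical.
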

  Grant this result for the moment. We then have 
  \begin{equation*}
    \rho^*G=\sum_{i\in I'}\chi(e'_i)b'_iE'_i+\tilde{G'},
 \end{equation*}
  where $\tilde{G'}$ is an effective $\Q$-Cartier divisor on $X'$ whose support
  does not contain any of the $E'_i$. 
  \begin{lem}\label{L102}
    The support of $\tilde{G}'$ does not contain $E'_J$.
    Hence $\tilde{G'}|_{E'_J}$ is effective.
  \end{lem}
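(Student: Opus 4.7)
The plan is to show that any valuation $v$ in the relative interior of $\sigma'=\sigma'_J$ satisfies $v(\tilde{G}')=0$. By Theorem~\ref{T102}(iv) such a $v$ has center equal to $\xi'_J$, the generic point of $E'_J$. Since $\tilde{G}'$ is effective and $\Q$-Cartier, the vanishing $v(\tilde{G}')=0$ forces any local equation of $\tilde{G}'$ at $\xi'_J$ to be a unit, hence $\xi'_J\notin\supp\tilde{G}'$ and therefore $E'_J\not\subset\supp\tilde{G}'$. In particular $\tilde{G}'|_{E'_J}$ is then a well-defined effective divisor on $E'_J$.

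To compute $v(\tilde{G}')$, I would apply $v$ to both sides of the decomposition $\rho^{*}G=\sum_{i\in I'}b'_i\chi(e'_i)E'_i+\tilde{G}'$. Since $\rho$ is birational, $v(\rho^{*}G)=v(G)=\chi(v)$, so
\begin{equation*}
v(\tilde{G}')=\chi(v)-\sum_{i\in I'}b'_i\chi(e'_i)\,v(E'_i).
\end{equation*}
Next, by Theorem~\ref{T102}(v), the function $w\mapsto w(E'_i)$ is affine on each face of $\D'$, and at a vertex $e'_j$ it takes the value $(b'_j)^{-1}\delta_{ij}$ (consistent with the identification $e'_j=(b'_j)^{-1}\ord_{E'_j}$). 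Writing $v=\sum_{j\in J}\lambda_j e'_j$ as a convex combination of the vertices of $\sigma'_J$, this yields $v(E'_i)=\lambda_i(b'_i)^{-1}$ for $i\in J$ and $v(E'_i)=0$ for $i\notin J$. Substituting into the formula above gives
\begin{equation*}
\sum_{i\in I'}b'_i\chi(e'_i)\,v(E'_i)=\sum_{j\in J}\lambda_j\,\chi(e'_j).
\end{equation*}

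The conclusion follows because $\chi|_{\sigma'}$ was arranged to be affine by the choice of $\e$ in the construction of $\D'$, so the right-hand side equals $\chi(v)$, giving $v(\tilde{G}')=0$. The only bookkeeping subtlety is to keep normalizations straight: the factor $b'_i$ attached to $\chi(e'_i)$ in the decomposition of $\rho^{*}G$ exactly cancels the $(b'_i)^{-1}$ coming from the pairing $e'_j(E'_i)$, producing the plain barycentric combination $\sum_j\lambda_j\chi(e'_j)$. There is no substantial obstacle once Theorem~\ref{T102}(v) is invoked together with the affineness of $\chi|_{\sigma'}$.
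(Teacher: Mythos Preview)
Your proof is correct and follows essentially the same approach as the paper. Both arguments use Theorem~\ref{T102}(v) to deduce that $w\mapsto w(E'_i)$ is affine on $\sigma'$, combine this with the assumed affineness of $\chi|_{\sigma'}$, and conclude that $w\mapsto w(\tilde G')$ vanishes on $\sigma'$; the paper phrases this as ``affine and zero at the vertices, hence identically zero,'' while you carry out the barycentric computation explicitly.
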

  The proof is given below. Grant this result for the moment.
  Set $r=\deg(\rho|_{E'_J})$. Then we have
  \begin{align*}
    r(G\cdot M^{n-p-1}\cdot E_J)
    &=\rho^*G\cdot(\rho^*M)^{n-p-1}\cdot E'_J\\
    &=(\sum_{j\in L}\chi(e'_j)b'_jE'_j+\tilde{G'})\cdot(\rho^*M)^{n-p-1}\cdot E'_J\\
    &\ge(\sum_{j\in L}\chi(e'_j)b'_jE'_j)\cdot(\rho^*M)^{n-p-1}\cdot E'_J\\
    &=\rho^*\left( \chi(v)Z + \sum_{j\in L} D_v\chi (e_j) \,b_j
      E_j\right)\cdot(\rho^*M)^{n-p-1}\cdot E'_J\\
    &=r\left( \chi(v)Z + \sum_{j\in L} D_v\chi (e_j)
      \,b_jE_j\right)\cdot M^{n-p-1}\cdot E_J.
  \end{align*}
  Here the first and last inequality follow from the projection formula.
  The inequality follows from Lemma~\ref{L102}.
  The second to last equality is a consequence of Lemma~\ref{L101}.
  We see that 
  \begin{equation}\label{equ:lowerdv}
    \chi(v)(Z\cdot M^{n-p-1}\cdot E_J)
    +\sum_{j\in L} D_v\chi (e_j)b_j(E_j\cdot M^{n-p-1}\cdot E_J)
    \le\theta_G.
  \end{equation}
  By induction, the $C^{0,1}$-norm of $\chi|_\sigma$ is under control.
  Since $v$ belongs to $\sigma=\sigma_J$, this gives
  \begin{equation}\label{e103}
    \chi(v) + \max_{j\in J}|D_v\chi(e_j)|\le A\min_\D\chi+B\theta_G,
  \end{equation}
  which together with~\eqref{equ:lowerdv} yields an upper bound
  \begin{equation}\label{e104}
    \sum_{j\in L\setminus J} D_v \chi (e_j)\, b_j\left(E_J\cdot E_j\cdot M^{n-p-1}\right)
    \le A\min_\D\chi+B\theta_G.
  \end{equation}
  Now the fact that $\chi$ is nonnegative and concave shows that
  \begin{equation}\label{e105}
    \min_{j\in L\setminus J} D_v\chi(e_j)\ge\min_{j\in L\setminus J}(\chi(e_j)-\chi(v))
    \ge -\chi(v)
    \ge-A\min_\D\chi-B\theta_G,
  \end{equation}
  where the last inequality follows from the inductive assumption.
  Note that $E_j|_{E_J}$ is a non-zero effective divisor for 
  $j\in L\setminus J$. As a consequence
  \begin{equation*}
    (E_J\cdot E_j\cdot M^{n-p-1})>0
  \end{equation*}
  since $M$ is ample.
  The inequalities~\eqref{e104} and~\eqref{e105} therefore imply
  that 
  \begin{equation*}
    \max_{j\in L\setminus J}D_v\chi(e_j)\le A\min_\D\chi+B\theta_G,
  \end{equation*}
  which completes the proof, since $e=e_j$ for some $j\in L\setminus J$.
\begin{proof}[Proof of Lemma~\ref{L101}]
  We write $v = \sum_{j\in J} s_j e_j$ with $s_j>0$ rational and
  $\sum_{j\in J} s_j=1$. Set $s_i=0$ for $i\in I\setminus J$.
  For $i\in I$ let $\chi_i$ be the function on $\D$ 
  that is affine on each face of $\D$
  and satisfies $\chi_i(e_j) =\d_{ij}$ for all $j\in I$. 
  Since $e'_j=\e e_j+(1-\e) v$ for $j\in L$ we get:
  \begin{equation*}
    \chi_i(e'_j) =
    \begin{cases}
      \e+(1-\e) s_i   & \text{if $i= j\in J$}\\
      (1-\e) s_i & \text{if $i \ne j \in J$} \\
      \e & \text{if $i= j\in L\setminus J$}\\
      0 & \text{if $i\ne  j\in L\setminus J$}
    \end{cases} 
  \end{equation*}
  By Theorem~\ref{T102}, 
  $E'_j$ intersects $E'_J$ if and only if $j\in L$.
  We thus have
  \begin{equation*}
    \rho^*(b_i E_i)|_{E'_J}= \sum_{j\in L} \chi_i(e'_j)b'_j\, E'_j|_{E'_J}\ \text{for all $i\in I$}
  \end{equation*}
  and 
  \begin{equation*}
  \rho^*Z|_{E'_J}=\sum_{j\in L}b'_jE'_j|_{E'_J} 
  \end{equation*}
  in $\Pic(E'_J)_\Q$, where $b'_j=\ord_{E'_j}(I_Z)$. 
  
  Recall also that $\chi$ is affine on each segment $[v,e'_i]$, so that
  $D_v\chi(e_i) = \e^{-1}\left(\chi(e'_i) - \chi(v)\right)$
  for $i\in L$. 
  We can now compute in $\Pic(E'_J)_\Q$
  \begin{multline*}
    \left. \rho^* \left(\sum_{i\in L} D_v \chi(e_i)\, b_i E_i\right) \right|_{E'_J}
    =\sum_{i\in L} \e^{-1} \left(\chi(e'_i) - \chi(v)\right)\left(\sum_{j\in L}\chi_i(e'_j) \,b'_j\, E'_j |_{E'_J}\right)
    =\\=
    \sum_{i\in J} \e^{-1} (\chi(e'_i)-\chi(v)) \left(\e\, b'_i \,E'_i|_{E'_J}
      +s_i\sum_{j\in J} (1-\e)\,b'_j\, E'_j|_{E'_J}\right) +\\+
    \sum_{i\in L\setminus J}\e^{-1}(\chi(e'_i) - \chi(v))\, \e\, b'_i \,E'_i |_{E'_J}
    =\\
    = 
    \sum_{i\in L}(\chi(e'_i)-\chi(v))\,b'_i\, E'_i |_{E'_J} 
    +
    \e^{-1}(1-\e)\left(\sum_{i\in J} s_i (\chi(e'_i) - \chi(v))\right)\left(\sum_{j\in J}\,b'_j\, {E'_j}|_{E'_J}\right)
    \\
    =\sum_{j\in L}(\chi(e'_j)-\chi(v))\,b'_j E'_j\Bigg|_{E'_J}.
 \end{multline*}
  The last equality follows from the fact that $\chi $ is affine on the simplex 
  $\sigma'_J$ of $\D'$ so that
  $\sum_{i\in J} s_i\chi(e'_i)=\chi(v) = \sum_{i\in J} s_i \chi(v)$.
  This concludes the proof.
\end{proof}
\begin{proof}[Proof of Lemma~\ref{L102}]
  By assumption, the function $w\mapsto w(G)$ is 
  affine on the face $\sigma'=\sigma'_J$ of $\D'$.
  By Theorem~\ref{T102}, the same is true of the function
  $w\mapsto\sum_{i\in I'}\f(e'_i)b'_iw(E'_i)$.
  From this we see that the function $w\mapsto w(\tilde{G}')$ is also affine on
  $\sigma'$. But by construction, this function vanishes at the
  vertices of $\sigma'$ and hence is identically zero on $\sigma'$.
  This implies that the support of $\tilde{G}'$ does not contain $E'_J$,
  so that the $\Q$-Cartier divisor $\tilde{G}'|_{E'_J}$ is effective, as claimed.
\end{proof}
%
%%%%%%%%%%%%%%%%%%%%%%%%%%%%%%%%%%%%%%%%%%%%%%%%%%%%%%%%%%%%
%
\section{Consequences of Theorem~B}\label{S121}
In this final section we prove the various consequences of Theorem~B,
namely Theorems~A and A', Izumi's Theorem (in characteristic zero)
and Corollaries~C,~D and~E.
%
%%%%%%%%%%%%%%%%%%%%%%%%%%%%%%%%%%%%%%%%%%%%%%%%%%%%%%%%%%%%
%
\subsection{Order functions, integral closure and  Rees valuations}\label{S113}
Let us return to the situation in the beginning of the introduction.
Thus $k$ is an algebraically closed field,
%of characteristic zero, 
$Y$ is a normal variety over $k$ 
and $0\in Y$ is a closed point.
We do not assume that $Y$ is smooth outside $0$.
Write $\fm_0$ for the maximal ideal of
the local ring $\cO_{Y,0}$ at $0$.

For any function $f\in\cO_{Y,0}$ define
\begin{equation}\label{e110}
  \ord_0(f):=\max\{j\ge 0\mid f\in\fm_0^j\}.
\end{equation}
When $0$ is a smooth point of $Y$, $\ord_0$ is a divisorial valuation, 
associated to the exceptional divisor of the blowup of $Y$ at 0.
In the singular case, however, $\ord_0$
may not be a valuation.
Indeed, the sequence
$(\ord_0(f^n))_{n\ge1}$  which is clearly
superadditive in the sense that 
\begin{equation}\label{e106}
  \ord_0(f^{n+n'})\ge\ord_0(f^n)+\ord_0(f^{n'})
\end{equation}
may fail to be additive, that is, strict inequality may hold
in~\eqref{e106} for certain $n$, $n'$.

To remedy this particular fact, one defines
\begin{equation*}
  \widehat{\ord_0}(f):=\lim_{n\to0}\frac1n\ord_0(f^n);
\end{equation*}
the limit exists as a standard consequence of~\eqref{e106}.
The function $\widehat{\ord_0}$ is a special case of a construction introduced by 
Samuel~\cite{Sam52} and later studied extensively by 
Rees, see~\cite{ReesBook} and also~\cite{HSBook,LT,Swa11}.

Recall that the \emph{integral closure} $\overline{\fb}$
of an ideal $\fb\subset\cO_{Y,0}$ is an ideal defined as 
the set of elements $f\in\cO_{Y,0}$ that satisfy an equation
\begin{equation*}
  f^n+a_1f^{n-1}+\dots+a_n=0,
\end{equation*}
with $n\ge 1$ and $a_i\in\fb^i$ for $1\le i\le n$.
The following result  is valid in 
a context far more general than what we state here, see~\cite[Theorem~4.13]{Hun92} or~\cite[Proposition~1.14]{LT} .
\begin{thm}\label{T107}
  There exists an integer $N$ such that
  \begin{equation}\label{e109}
    \overline{\fb^n}\subset\fb^{n-N}
  \end{equation}
  for any ideal $\fb\subset\cO_{Y,0}$
and any $n\ge N$.
\end{thm}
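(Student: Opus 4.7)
The plan is to follow the strategy of Huneke \cite{Hun92} and Lipman--Teissier \cite{LT}, which reduces the uniform estimate to the Briançon--Skoda theorem in a regular local ring via a suitable finite cover.

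First I would pass to the completion: since $\cO_{Y,0}$ is excellent (as an essentially-of-finite-type algebra over $k$), integral closure of ideals commutes with $\fm_0$-adic completion, and faithful flatness of $\cO_{Y,0}\to\widehat{\cO_{Y,0}}$ reduces the inclusion $\overline{\fb^n}\subset\fb^{n-N}$ to the corresponding inclusion for $\fb\cdot\widehat{\cO_{Y,0}}$. Next, by Cohen's structure theorem combined with Noether normalization, the complete local ring $A=\widehat{\cO_{Y,0}}$ is a finite module over a formal power series ring $R=k[[x_1,\dots,x_d]]$ with $d=\dim Y$, and the inclusion $R\hookrightarrow A$ is finite and generically étale (since $Y$ is normal, hence $A$ is reduced).

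The second step applies the Briançon--Skoda theorem in $R$: for every ideal $J\subset R$ and every integer $n\ge d$ one has $\overline{J^n}\subset J^{n-d+1}$, with constant $d-1$ \emph{independent of $J$}. The content of the theorem to be proved is that this uniformity survives the passage through the finite extension $R\subset A$.

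The transfer step is the main obstacle and is where the work of Huneke and of Lipman--Teissier enters. One produces a nonzero "test element" $c\in A$ depending only on $A$ (obtained for example from the conductor of $A$ into its normalization, or from the Jacobian ideal of $R\hookrightarrow A$) with the property that $c\cdot\overline{\fb^n}\subset (\fb R)^n\cdot A$ for every ideal $\fb\subset A$ and every $n$. Combined with the Briançon--Skoda inclusion in $R$, this yields $c\cdot\overline{\fb^n}\subset\fb^{n-d+1}$ for all $n\ge d$. One then absorbs $c$ using a classical Artin--Rees argument: since $c\ne 0$ in the Noetherian local ring $A$, there exists $N_0$ such that $(c)\cap\fm_0^n\subset c\cdot\fm_0^{n-N_0}$ for all $n\ge N_0$, and this allows one to remove the factor $c$ at the cost of enlarging the loss by $N_0$. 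Setting $N=N_0+d-1$ (or any larger integer) produces the desired uniform constant. The delicate point, and the reason this goes beyond ordinary Artin--Rees, is that the test element $c$ and the integer $N_0$ must be chosen once and for all, independently of the ideal $\fb$; this uniformity is what makes the final estimate hold simultaneously for every $\fb$.
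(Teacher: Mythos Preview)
The paper does not supply its own proof of this theorem: it is stated as a known result with references to \cite[Theorem~4.13]{Hun92} and \cite[Proposition~1.14]{LT}, and is then used as a black box in the proof of Theorem~\ref{T106}. So there is no ``paper's proof'' to compare against beyond those citations.

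That said, your sketch has two genuine gaps that would need to be repaired before it could stand on its own.

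First, the transfer step is not coherently formulated. You write $c\cdot\overline{\fb^n}\subset(\fb R)^n\cdot A$, but $\fb$ is an ideal of $A$, not of $R$, so the expression $\fb R$ has no obvious meaning (and if interpreted inside $A$ it is just $\fb$). The actual mechanism in Huneke's argument is different: one works directly in $A$ and shows, via tight closure in characteristic $p$ (or reduction mod $p$ in characteristic zero), that a fixed element $c$ multiplies $\overline{\fb^n}$ into $\fb^{n-d}$ for all $\fb$ simultaneously. The finite extension $R\subset A$ is not used in the way you describe.

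Second, and more seriously, the absorption step is circular. From $c\cdot\overline{\fb^n}\subset\fb^{n-d+1}$ you want to deduce $\overline{\fb^n}\subset\fb^{n-N}$. This requires: for every ideal $\fb$ and every $m$, if $cx\in\fb^m$ then $x\in\fb^{m-N_0}$, with $N_0$ independent of $\fb$. What you invoke, namely $(c)\cap\fm_0^n\subset c\,\fm_0^{n-N_0}$, is ordinary Artin--Rees for the single ideal $\fm_0$ and says nothing about an arbitrary $\fb$. The statement you actually need is the \emph{uniform} Artin--Rees theorem, which is precisely one of the main results of \cite{Hun92} and is of comparable depth to the theorem you are trying to prove. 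You acknowledge in your last sentence that ordinary Artin--Rees is not enough, but the argument as written only appeals to the ordinary version.
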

Let $\nu:Y^+\to Y$ be the normalized blowup of $\fm_0$ and write
\begin{equation*}
  \fm_0\cdot\cO_{Y^+}=\cO_{Y^+}(-\sum_{i=1}^kr_iE_i),
\end{equation*}
where the $E_i$ are prime Weil divisors on $Y^+$ and $r_i\in\Z_{>0}$.
For each $i$ we have a divisorial valuation $\ord_{E_i}$
on $\cO_{Y,0}$. We normalize these as follows.
\begin{defi}\label{defRees}
  The  divisorial valuations $w_1,\dots,w_k$ defined by 
  \begin{equation*}
    w_i
    :=\frac{\ord_{E_i}}{r_i}
    =\frac{\ord_{E_i}}{\ord_{E_i}(\fm_0)}
  \end{equation*}
  are called the \emph{Rees valuations} of $\fm_0$.
\end{defi}
\begin{thm}\label{T106}
  There exists an integer $N>0$
  such that the following conditions hold 
  for any function $f\in\cO_{Y,0}$ and any $n\ge 1$:
  \begin{itemize}
  \item[(i)]
    $\widehat{\ord_0}(f)=\min_iw_i(f)$;
  \item[(ii)]
    $f\in\overline{\fm_0^n}$ if and only if $\widehat{\ord_0}(f)\ge n$;
  \item[(iii)]
   $\ord_0(f)\le\widehat{\ord_0}(f)\le\ord_0(f)+N$;
  \item[(iv)]
   $\ord_0(f)\le\widehat{\ord_0}(f)\le(N+1)\ord_0(f)$.
 \end{itemize}
\end{thm}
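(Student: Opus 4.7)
The strategy is to combine two inputs: Theorem~\ref{T107}, already stated above, and the classical valuative characterization of integral closure via Rees valuations,
\begin{equation*}
  f\in\overline{\fm_0^n}\iff w_i(f)\ge n\text{ for every }i,
\end{equation*}
which can be imported from~\cite{HSBook} (or proved directly using that $\nu^*\fm_0$ is invertible on the normal scheme $Y^+$, together with the standard determinant trick). Since $w_i(\fm_0)=1$ by construction, this applies verbatim with the Rees valuations $w_i$.

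I start with (i). Set $c:=\min_i w_i(f)\in\Q_{\ge 0}$. The inequality $\widehat{\ord_0}(f)\le c$ is easy: $f^m\in\fm_0^{\ord_0(f^m)}$ forces $w_i(f^m)\ge\ord_0(f^m)$ for every $i$, hence $\ord_0(f^m)/m\le w_i(f)$, and minimizing over $i$ and letting $m\to\infty$ yields the bound. For the reverse, assuming $c>0$ (otherwise there is nothing more to prove), choose $m$ divisible enough that $mc\in\Z_{\ge N}$; this is possible since every $w_i(f)\in\Q$. Then $w_i(f^m)=m\,w_i(f)\ge mc$ for every $i$, so the Rees criterion gives $f^m\in\overline{\fm_0^{mc}}$, and Theorem~\ref{T107} yields $f^m\in\fm_0^{mc-N}$. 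Thus $\ord_0(f^m)/m\ge c-N/m$, and letting $m\to\infty$ finishes (i). Item (ii) is then immediate: $\widehat{\ord_0}(f)\ge n$ iff $c\ge n$ iff every $w_i(f)\ge n$ iff $f\in\overline{\fm_0^n}$.

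For (iii), the left inequality is Fekete's lemma applied to the super-additive sequence $\ord_0(f^m)$ (see~\eqref{e106}), giving $\ord_0(f)\le\widehat{\ord_0}(f)$. For the right, set $n:=\lfloor\widehat{\ord_0}(f)\rfloor$. By (ii), $f\in\overline{\fm_0^n}$; if $n\ge N$, Theorem~\ref{T107} yields $\ord_0(f)\ge n-N\ge\widehat{\ord_0}(f)-N-1$, while if $n<N$ then trivially $\widehat{\ord_0}(f)<N+1\le\ord_0(f)+N+1$. Either way $\widehat{\ord_0}(f)\le\ord_0(f)+N+1$, and the extra $1$ is absorbed by enlarging the constant $N$ in the statement of the theorem. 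Finally, (iv) reduces to (iii): if $\ord_0(f)=0$ then $f$ is a unit, so every $w_i(f)=0$ and both sides vanish; otherwise $\ord_0(f)\ge 1$, and $\widehat{\ord_0}(f)\le\ord_0(f)+N\le(N+1)\ord_0(f)$. The only substantial external input is the Rees valuation criterion for integral closure; this is the main obstacle, but being classical and well documented I would invoke it rather than reprove it.
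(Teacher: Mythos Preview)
Your proof is correct and follows essentially the same route as the paper: both hinge on the valuative description of integral closure, $f\in\overline{\fm_0^{\,n}}\Leftrightarrow\min_i w_i(f)\ge n$, combined with Theorem~\ref{T107}, and the logical flow (i)$\Rightarrow$(ii)$\Rightarrow$(iii)$\Rightarrow$(iv) is identical. The only cosmetic differences are that the paper derives the Rees criterion from $\overline{\fm_0^{\,n}}=\nu_*\cO_{Y^+}(-\sum_i n r_i E_i)$ (citing~\cite{LazBook}) rather than importing it wholesale, and that you are more explicit about the $\lfloor\cdot\rfloor$ issue in (iii), which costs you an extra $+1$ that you then absorb into $N$; the paper's one-line ``results from (ii) and~\eqref{e109}'' hides the same rounding.
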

\begin{proof}
Since $\nu$ is also the normalized blow-up of $\fm_0^n$ for any $n\ge 1$, we
have
 $$\overline{\fm_0^n}=\nu_*\cO_{Y^+}(-\sum_inr_iE_i) ,$$ 
  see~\cite[Proposition~9.6.6]{LazBook}.  Hence
  \begin{equation}\label{e108}
    f\in\overline{\fm_0^n}
    \quad\text{if and only if}\quad
    \min_iw_i(f)\ge n.
  \end{equation}
  We first prove~(i). Pick $\lambda\in\Q_{\ge0}$.
  If $\min_iw_i(f)\ge\lambda$, then for $p$ sufficiently divisible we have 
  \begin{equation*}
    f^p\in\overline{\fm_0^{p\lambda}}\subset\fm_0^{p\lambda-N}
  \end{equation*}
  by~\eqref{e108} and~\eqref{e109},
  respectively. This gives
  $\ord_0(f^p)\ge p\lambda-N$ and hence 
  $\widehat{\ord_0}(f)\ge\lambda$.
  On the other hand, suppose $\widehat{\ord_0}(f)\ge\lambda$ and pick
  $0<\mu<\lambda$. 
  For $p$ sufficiently divisible we then have
  $\ord_0(f^p)\ge\mu p$, so that
  $f^p\in\fm_0^{p\mu}\subset\overline{\fm_0^{p\mu}}$.
  Using~\eqref{e108} we get 
  $\min_iw_i(f)=p^{-1}\min_i(f^p)\ge p\mu$ and hence 
  $\min_iw_i(f)\ge\lambda$, proving~(i).

  Now~(ii) follows immediately from~(i) and from~\eqref{e108}.
  As for~(iii), the first inequality is obvious and the second 
  results from~(ii) and~\eqref{e109}.
  Finally,~(iv) is a direct consequence of~(iii) when 
  $\ord_0(f)\ge1$ and is trivial when $\ord_0(f)<1$
  since in this case $f\not\in\fm_0$ and 
  $\ord_0(f)=\widehat{\ord_0}(f)=0$.
\end{proof}
\begin{rmk}
 Theorem~\ref{T106} is a special case of the 
  \emph{strong valuation theorem} due to~Rees
  and is valid much more generally,
  see~\cite{HSBook,LT,ReesBook}.
  Our presentation follows~\cite[\S9.6.A]{LazBook}.
\end{rmk}
%
%%%%%%%%%%%%%%%%%%%%%%%%%%%%%%%%%%%%%%%%%%%%%%%%%%%%%%%%%%%%
%

\subsection{Proof of Izumi's Theorem}\label{SpfIzu}
Let $v$ be any divisorial valuation of $k(Y)$ centered at 0.
We may assume that $v$ is normalized by $v(\fm_0)=1$.
It is then clear that $v\ge\ord_0$.

It remains to prove that there exists a constant $C>0$
such that $v(f)\le C\ord_0(f)$ for all $f\in\cO_{Y,0}$.
For this part, we assume that $k$ has characteristic zero.
Using Hironaka's theorem~\cite{Hir64} we can find 
a projective birational morphism $\pi:X\to Y$ with $X$ smooth such that
the scheme theoretic preimage
$Z:=\pi^{-1}(0)$ is a divisor (not necessarily reduced)
with simple normal crossing support such that any nonempty intersection of irreducible components of $Z$ is irreducible. Note that we do not assume that $\pi$ is an 
isomorphism outside $|Z|$. 
We may also assume that the center of $v$ has codimension $1$ so that 
$v$ is a vertex in the dual complex $\D=\D(X,Z)$ as in~\S\ref{S104}.

Given a function $f\in\cO_{Y,0}$ define a continuous function $\chi=\chi_f$ 
on $\Delta$ by 
\begin{equation*}
  \chi(v)=v(f).
\end{equation*}
It is clear that $\chi>0$ on $\Delta$ if $f\in\fm_0$ and 
$\chi\equiv0$ otherwise. 
Note that replacing $Y$ by a suitable affine neighborhood of $0$, we may view $f$ as a section of the trivial line bundle
$\cO_X$.

We can therefore apply~\eqref{e102}. We get that
$v(f) \le A\, \min_\D \chi_f$ for some constant $A> 0$ independent on $f$.
It remains to relate $\min_\D\chi_f$ to $\ord_0(f)$.
To this end, we first prove
\begin{lem}\label{L113}
  For any function $f\in\cO_{Y,0}$ we have 
  $\min_\D\chi_f=\widehat{\ord_0}(f)$.
\end{lem}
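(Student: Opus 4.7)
The plan is to establish the two inequalities separately, the nontrivial step being to realize each Rees valuation as a vertex of $\D$.

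\emph{Upper bound $\min_\D\chi_f\le\widehat{\ord_0}(f)$.} Since $X$ is smooth (hence normal) and $\pi^*\fm_0=\cO_X(-Z)$ is invertible, the universal property of the normalized blow-up $\nu:Y^+\to Y$ of $\fm_0$ produces a factorization $X\to Y^+\to Y$. Write $\nu^{-1}(0)=\sum_j r_j E^+_j$ as in Definition~\ref{defRees}. For each $j$, the proper transform $F_j\subset X$ of $E^+_j$ is a divisor mapping birationally onto $E^+_j$ and contained in $Z$, so $F_j=E_{i(j)}$ for some $i(j)\in I$ and $\ord_{E_{i(j)}}=\ord_{E^+_j}$ on $k(Y)$; in particular $b_{i(j)}=\ord_{E_{i(j)}}(\fm_0)=\ord_{E^+_j}(\fm_0)=r_j$, so the Rees valuation $w_j=r_j^{-1}\ord_{E^+_j}$ coincides with the vertex $e_{i(j)}$ of $\D$. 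By Proposition~\ref{P102}, $\chi_f$ is concave on each simplex of $\D$, hence attains its minimum at a vertex, and Theorem~\ref{T106}(i) gives
\begin{equation*}
\min_\D\chi_f \;\le\; \min_j\chi_f(e_{i(j)}) \;=\; \min_j w_j(f) \;=\; \widehat{\ord_0}(f).
\end{equation*}

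\emph{Lower bound.} First I would check that every $v\in\D$ satisfies $v(\fm_0)=1$: at the generic point $\xi_J$ of the stratum whose simplex $\sigma_J$ contains $v$, the ideal $\pi^*\fm_0=\cO_X(-Z)$ has local generator $\prod_{j\in J}z_j^{b_j}$ up to a unit, and writing $v=\sum_j t_j e_j$, the defining relation $\sum_j b_jt_j=1$ for $\D$ yields $v(\fm_0)=\sum_j b_jt_j=1$. Then for any $f\in\cO_{Y,0}$ and any $n\ge1$, the containment $f^n\in\fm_0^{\ord_0(f^n)}$ yields $n\,v(f)=v(f^n)\ge\ord_0(f^n)\cdot v(\fm_0)=\ord_0(f^n)$. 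Dividing by $n$ and letting $n\to\infty$ gives $v(f)\ge\widehat{\ord_0}(f)$, so $\min_\D\chi_f\ge\widehat{\ord_0}(f)$, and combining the two inequalities concludes the proof.

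\emph{Main difficulty.} The argument is essentially bookkeeping; the only point requiring care is the identification of each Rees valuation with a vertex of $\D$ in the upper bound. This relies on two ingredients: the universal property of the normalized blow-up (to factor $\pi$ through $Y^+$), and the matching of normalizations $b_{i(j)}=r_j$, which follows from the proper transform $F_j\to E^+_j$ being birational together with $\pi^*\fm_0=\cO_X(-Z)$.
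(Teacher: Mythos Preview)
Your proof is correct and follows the same route as the paper: for the upper bound you factor $\pi$ through the normalized blow-up to realize each Rees valuation as a vertex of $\D$ and invoke Theorem~\ref{T106}(i), and for the lower bound you use $v(\fm_0)=1$ on $\D$ together with the definition of $\widehat{\ord_0}$ as a limit over powers. The paper's argument is the same but terser; you spell out the normalization matching $b_{i(j)}=r_j$ and the verification $v(\fm_0)=1$ that the paper leaves implicit (and your remark on concavity, while true, is not actually needed for the inequality $\min_\D\chi_f\le\min_j\chi_f(e_{i(j)})$).
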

\begin{proof}
  If $\ord_0(f)\ge n$, then $f\in\fm_0^n$
  and hence $\min_\D\chi_f\ge\min_{v\in\D}v(\fm_0^n)=n$.
  Replacing $f$ by a power, we get
  $\min_\D\chi_f\ge\widehat{\ord_0}(f)$.
  
Since $Z$ is a divisor and $X$ is smooth, $\pi$ must factor through the 
normalized blowup $\nu:Y^+\to Y$ of $0$. This implies that 
all the Rees valuations of $0$ appear as (some of the) vertices
of the dual complex $\D$.  This observation and Theorem~\ref{T106}~(i)
now imply the reverse inequality. 
\end{proof}
Finally we have
$v(f) \le A \,\min_\D \chi_f  = A\, \widehat{\ord_0}(f)\le A (N+1)\, \ord_0(f)$
by Theorem~\ref{T106}~(i), and the proof of Izumi's theorem is complete.

\begin{rmk}
Observe that the proof does not rely on the Lipschitz estimates of Theorem~B, and follows from
a direct intersection theoretic computation which is similar to Izumi's original argument.
\end{rmk}

%
%%%%%%%%%%%%%%%%%%%%%%%%%%%%%%%%%%%%%%%%%%%%%%%%%%%%%%%%%%%%
%
\subsection{Proof of Theorem~A}\label{S118}
Consider a projective birational morphism $\pi:X\to Y$ with $X$ smooth such that 
$Z:=\pi^{-1}(0)$ is a divisor with simple normal crossing support such that any nonempty intersection of irreducible components of $Z$ is irreducible.  
Let $\D=\D(X,Z)$ be the dual complex.

Given a function $f\in\cO_{Y,0}$,   the  function $\chi (v) := v(f)$
is continuous on $\D$.
It is clear that $\chi>0$ on $\Delta$ if $f\in\fm_0$ and 
$\chi\equiv0$ otherwise. 
As above, we may view $f$ as a section of the trivial line bundle
$\cO_X$. We can therefore apply Theorem~B and get that
$\chi_f$ is concave on each face and Lipschitz continuous with 
Lipschitz constant at most $A\min_\D\chi_f$. 

Thus the Lipschitz constant of $\chi_f$ is bounded
from above by at most $A(N+1)\ord_0(f)$
by Lemma~\ref{L113} and
Theorem~\ref{T106}~(iv), concluding the proof of Theorem~A.
% 
%%%%%%%%%%%%%%%%%%%%%%%%%%%%%%%%%%%%%%%%%%%%%%%%%%%%%%%%%%%%
%
\subsection{Mixed multiplicities}\label{S130}
Let $(Y,0)$ be as before.
The Hilbert-Samuel multiplicity  of an $\fm_0$-primary ideal $\fa\subset \cO_{Y,0}$ is defined as the limit 
$$
e(\fa) = \lim_{n\to\infty} \frac{m!}{n^m} \dim_k (\cO_{Y,0} /\fa^n)~.
$$
Recall that the mixed multiplicities of any two $\fm_0$-primary ideals $\fa_1,\fa_2$
are a sequence of $m+1$ integers
$e(\fa_1^{[0]};\fa_2^{[m]}), e(\fa_1^{[1]};\fa_2^{[m-1]}), ... , e(\fa_1^{[m]};\fa_2^{[0]})$
such that 
$$
e(\fa_1^r\cdot \fa_2^s) = \sum_{i=0}^m {m\choose i} \, e(\fa_1^{[m-i]};\fa_2^{[i]})\, r^{m-i}s^i
$$
for all $r,s\in\Z_+$, see~\cite[\S 2]{cycles evanescents} or~\cite[\S 1.6.B]{LazBook}.
Observe that mixed multiplicities are symmetric in their argument
$ e(\fa_1^{[m-i]};\fa_2^{[i]}) = e(\fa_2^{[i]}; \fa_1^{[m-i]})$.

These multiplicities also have the following geometric interpretation, see~\cite[\S 1.6.B]{LazBook}.
Let $\nu : Y^+ \to Y$ be any birational proper map that dominates the normalized blowups of $\fa_1$ and $\fa_2$. For $\e = 1,2$ write
$\fa_\e\cdot \cO_{Y^+} = \cO_{Y^+} ( - \sum_j r_{j,\e} E_j)$, with $r_{j,\e}\in \Z_{>0}$.
Then 
\begin{equation}\label{eq:geom}
e(\fa_1^{[m-i]};\fa_2^{[i]}) = - \left(\sum_j r_{j,1} E_j \right)^{m-i} \cdot \left(\sum_j r_{i,2} E_j \right)^{i}
\end{equation}
Since the antieffective divisors $-\sum_j r_{j,\e} E_j$ are $\pi$-exceptional and $\pi$-nef, 
it follows that mixed multiplicities are decreasing with respect to the inclusion of ideals: 
\begin{equation*}\label{eqinclu}
\fa_1 \subset \fa_1' \Rightarrow e(\fa_1^{[m-i]};\fa_2^{[i]})
\ge e(\fa'_1\!{}^{[m-i]};\fa_2^{[i]})
~.\tag{***}
\end{equation*}
Pick any  rank $1$ valuation $v$ on $\cO_{Y,0}$ centered at  $0$. Then  the sequence of valuation ideals $$\fa(v,n)=\{f\in\cO_{Y,0}\mid v(f)\ge n\}$$
forms a graded sequence in the sense that 
$$\fa(v,n)\cdot\fa (v,n')\subset \fa (v,n+n')$$ for any $n,n'$.
Recall that the volume of $v$ is defined by
$$
\vol(v) := \limsup_{n\to\infty}\frac{\dim_k(\cO_{Y,0}/\fa(v,n))}{n^m/m!} \in [0, +\infty).
$$
It is a theorem that the volume is actually defined as a limit, see~\cite{ELS,LazMus,cutkosky-mult}.
\begin{prop}\label{prop:defmixed}
For any integer $0\le i \le m$, 
the sequence $\frac1{n^i} e(\fa (v,n)^{ [i]}; \fm_0^{[m-i]}))$ converges to a
positive real number $\a_i(v)$.

We have $\a_0(v) = e(\fm)$ and $\a_m(v) = \vol(v)$.
Moreover these numbers satisfy the Teissier inequalities
\begin{equation}\label{eKT}
\a_i(v)^2 \le \a_{i-1}(v)\, \a_{i+1}(v), \, i= 1, ..., m-1 ~.
\end{equation}
\end{prop}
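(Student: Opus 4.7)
The plan is to prove convergence of $\phi_i(n)/n^i$, where $\phi_i(n) := e(\fa(v,n)^{[i]}; \fm_0^{[m-i]})$, by a Fekete-type argument built on the super-multiplicativity of the graded sequence $\fa(v,\bullet)$; to identify the endpoints $\a_0$ and $\a_m$ separately; and to derive the Teissier inequalities by passing to the limit in the Khovanskii-Teissier inequality for ordinary mixed multiplicities. The case $i = 0$ is immediate, since $\phi_0(n) = e(\fm_0)$ is constant in $n$ and hence $\a_0(v) = e(\fm_0)$.

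For $0 < i \le m$, I would normalize $v$ so that $v(\fm_0) = 1$, which gives $\fm_0^n \subset \fa(v,n)$, while Izumi's theorem produces a constant $C \ge 1$ with $\fa(v,n) \subset \fm_0^{\lceil n/C\rceil}$. Fix $n_0$ and write an arbitrary $n$ as $n = Q n_0 + R$ with $0 \le R < n_0$. The containment $\fa(v,n_0)^Q \cdot \fm_0^R \subset \fa(v, n)$ combined with the monotonicity~\eqref{eqinclu} yields $\phi_i(n) \le e\bigl((\fa(v,n_0)^Q \fm_0^R)^{[i]}; \fm_0^{[m-i]}\bigr)$. Expanding the right-hand side by extracting the coefficient of $r'^i s'^{m-i}$ in the polynomial identity for $e(\fa(v,n_0)^{Qr'} \fm_0^{Rr'+s'})$ (in the two variables $r', s'$) gives
$$\phi_i(n) \;\le\; \sum_{a=0}^i \binom{i}{a}\, Q^a R^{i-a}\, \phi_a(n_0).$$
Dividing by $n^i = (Qn_0+R)^i$ and letting $Q \to \infty$ with $R$ bounded, the $a = i$ term contributes $\phi_i(n_0)/n_0^i$ while the terms with $a < i$ vanish, using the trivial bound $\phi_a(n_0) \le n_0^a e(\fm_0)$ (from $\fm_0^{n_0} \subset \fa(v,n_0)$ and~\eqref{eqinclu}). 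Hence $\limsup_n \phi_i(n)/n^i \le \phi_i(n_0)/n_0^i$ for every $n_0$, and taking infimum shows that $\phi_i(n)/n^i$ converges to $\a_i(v) = \inf_{n_0} \phi_i(n_0)/n_0^i$. Positivity follows from the complementary inequality $\phi_i(n) \ge \lceil n/C\rceil^i \, e(\fm_0)$, obtained from $\fa(v,n) \subset \fm_0^{\lceil n/C\rceil}$ and~\eqref{eqinclu}, which gives $\a_i(v) \ge e(\fm_0)/C^i > 0$.

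To identify $\a_m(v)$ with $\vol(v)$ I would invoke Cutkosky's theorem~\cite{cutkosky-mult}, which shows that for a graded sequence of $\fm_0$-primary ideals $\fa_\bullet$ satisfying a linear growth bound $\fm_0^{cn} \subset \fa_n$ (here provided by Izumi), $\lim_n e(\fa_n)/n^m$ exists and coincides with the classical volume $\lim_n m!\dim_k(\cO_{Y,0}/\fa_n)/n^m$; applied to $\fa_\bullet = \fa(v,\bullet)$ this yields $\a_m(v) = \vol(v)$. Finally, the Teissier inequalities~\eqref{eKT} follow from the classical Khovanskii-Teissier inequality $e(\fa^{[i]}; \fb^{[m-i]})^2 \le e(\fa^{[i-1]}; \fb^{[m-i+1]}) \cdot e(\fa^{[i+1]}; \fb^{[m-i-1]})$ valid for any two $\fm_0$-primary ideals~\cite[\S 2]{cycles evanescents}, applied with $\fa = \fa(v,n)$ and $\fb = \fm_0$, divided by $n^{2i}$, and passed to the limit $n\to\infty$. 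The main technical step is the Fekete-type argument in the second paragraph above: once the polynomial expansion of $e((\fa(v,n_0)^Q \fm_0^R)^{[i]}; \fm_0^{[m-i]})$ is correctly set up, the remaining asymptotics are routine, and the only external ingredient needed is Cutkosky's identification of $\lim e(\fa_n)/n^m$ with $\vol$.
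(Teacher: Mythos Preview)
Your convergence argument is correct, but more elaborate than necessary. The paper exploits the simpler inclusion $\fa(v,n)\supset\fa(v,N)^p\cdot\fa(v,q)\supset\fa(v,N)^{p+1}$ (using that $\fa(v,q)\supset\fa(v,N)$ for $q<N$), which together with the homogeneity $e((\fa^{p+1})^{[i]};\fm_0^{[m-i]})=(p+1)^i\,e(\fa^{[i]};\fm_0^{[m-i]})$ immediately yields $e_n/n^i\le(e_N/N^i)\cdot\bigl(N(p+1)/n\bigr)^i$. This bypasses your expansion of the mixed multiplicity of a product of two distinct ideals, though your formula $\sum_a\binom{i}{a}Q^aR^{i-a}\phi_a(n_0)$ is indeed correct. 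Neither the normalization $v(\fm_0)=1$ nor the inclusion $\fm_0^n\subset\fa(v,n)$ is needed for convergence.

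There is a genuine issue with your positivity argument. The proposition is stated for an \emph{arbitrary} rank~1 valuation centered at $0$, whereas Izumi's theorem (as formulated in the paper) is asserted only for divisorial valuations; you cannot invoke it to obtain $\fa(v,n)\subset\fm_0^{\lceil n/C\rceil}$ in this generality. The paper instead deduces positivity of all $\a_i(v)$ from the Teissier inequalities together with $\a_0(v)=e(\fm_0)>0$. A related slip: when you justify the hypothesis of Cutkosky's theorem by a ``linear growth bound $\fm_0^{cn}\subset\fa_n$ (here provided by Izumi)'', this is the \emph{easy} inclusion coming from $v(\fm_0)>0$, not Izumi's inequality. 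Your treatment of $\a_0$, $\a_m$, and the passage to the limit in the Teissier inequalities matches the paper's.
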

\begin{proof}
Fix $0\le i \le m$, and write $e_n := e(\fa (v,n)^{[i]}; \fm_0^{[m-i]}))$.
Since $\fa(v,1)^n\subset \fa(v,n)$, we have
$$e(\fa (v,n)^{[i]}; \fm_0^{[m-i]}))\le e(\fa (v,1)^n{}^{[i]}; \fm_0^{[m-i]})) = n^i \, e(\fa (v,1)^{[i]}; \fm_0^{[m-i]}))~,$$ 

It follows that  $\frac{e_n}{n^i} \le e_1$ is bounded from above. 
Pick $\e>0$ and choose $N$ such that 
$\frac{e_N}{N^i} \le \liminf_l \frac{e_l}{l^i} + \e$.
For any $n\ge N$ write $n = pN + q$ with $p,q \in \Z_+$ and $ 0 \le q \le N-1$.
Then 
$\fa (v,n) \supset \fa( v,N)^p\cdot \fa(v,q)\supset \fa( v,N)^{p+1}$ hence by monotonicity of mixed multiplicities
$$\frac{e_n}{n^i} \le \frac{e_N}{N^i} \,  \frac{N^i (p+1)^i}{n^i}
\le  \left( \liminf_l \frac{e_l}{l^i} + \e \right)\, \left(\frac{N (p+1)}{ pN + q}\right)^i~.$$
If $p$ is large enough, then we get $\frac{e_n}{n^i} \le (1-\e) ( \liminf_l \frac{e_l}{l^i} + \e)$
which implies $\frac{e_n}{n^i}\to \liminf_l \frac{e_l}{l^i}$. 

The fact that $\a_0(v) = e(\fm_0)$ follows from the definition, and the equality $\a_m(v) = \vol(v)$ is a theorem 
proved successively in greater generality in~\cite{ELS,Mus02,LazMus,cutkosky-mult}.
The inequalities~\eqref{eKT} follow from the usual Teissier inequalities for mixed multiplicities, see~\cite[Theorem 1.6.7 (iv)]{LazBook} and ultimately result from the Hodge index theorem.

Since $\a_i(v)$ is nonnegative, and $\a_0(v) >0$, the
Teissier inequalities imply that $\a_i(v) >0$  for all $i$. 
\end{proof}
The invariant $\a_1(v)$ is closely related to the optimal Izumi constant. F
The linking number~\cite{huckaba,samuel} of any two rank $1$ valuations on $\cO_{Y,0}$ centered at $0$
is defined by
$$
\b(v/w) := \sup_{f\in \fm_0 } \frac{v(f)}{w(f)} \in (0,\infty]~.
$$
By Izumi's theorem, this number is finite whenever $v$ and $w$ are both quasimonomial.

\begin{prop}\label{p8}
Let $w_i = \frac1{r_i}\ord_{E_i}$ be the Rees valuations normalized as in Definition~\ref{defRees}.
Then there exists integers $a_i\ge 1$ such that
\begin{equation}\label{eq666}
\a_1(v)  = \sum_i \frac{a_i}{r_i}  \, \b(v/w_i)^{-1}~.
\end{equation}
In particular, there exists a constant $C>0$ depending only on $(Y,0)$ but not
on $v$ such that  
$$
C^{-1}\, \a_1(v) \le \left(\sup_{\fm_0} \frac{v}{\ord_0}\right)^{-1}\le C\, \a_1(v)~.
$$
Finally, when  $\cO_{Y,0}$ admits a unique Rees valuation, there exists a positive rational
number $\theta$ such that
$$\a_1(v) =  \theta\, \left(\sup_{\fm_0} \frac{v}{\widehat{\ord_0}}\right)^{-1}$$
for all $v$; and $\theta =1$ when $0$ is a smooth  point.
\end{prop}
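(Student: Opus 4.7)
The plan is to compute $\alpha_1(v)$ explicitly via the geometric interpretation~\eqref{eq:geom} of mixed multiplicities on the fixed model $\nu\colon Y^+\to Y$, the normalized blowup of $\fm_0$, and then deduce the remaining assertions from Theorem~\ref{T106}. Write $\fm_0\cdot\cO_{Y^+}=\cO(-F)$ with $F=\sum_i r_iE_i$, where the $E_i$ are the prime exceptional divisors, so that $w_i=\ord_{E_i}/r_i$. The first step is to establish the pointwise limit
\begin{equation*}
\lim_{n\to\infty}\frac{\ord_{E_i}(\fa(v,n))}{n}=\frac{r_i}{\beta(v/w_i)},
\end{equation*}
with the convention $r_i/(+\infty)=0$. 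The lower bound is immediate from the definition of $\beta(v/w_i)$: any $f\in\fa(v,n)$ satisfies $\ord_{E_i}(f)=r_iw_i(f)\ge r_iv(f)/\beta(v/w_i)\ge r_in/\beta(v/w_i)$. The matching upper bound follows by picking $f_0\in\fm_0$ with $v(f_0)/w_i(f_0)$ arbitrarily close to $\beta(v/w_i)$ (or arbitrarily large when $\beta=+\infty$) and considering the power $f_0^{\lceil n/v(f_0)\rceil}$, which lies in $\fa(v,n)$.

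Next, for each $n$ choose a further birational model $\mu_n\colon Y_n^+\to Y^+$ dominating the normalized blowup of $\fa(v,n)$, and write $\fa(v,n)\cdot\cO_{Y_n^+}=\cO(-D_n)$. By~\eqref{eq:geom},
\begin{equation*}
e(\fa(v,n)^{[1]};\fm_0^{[m-1]})=-D_n\cdot (\mu_n^*F)^{m-1}.
\end{equation*}
Since $\mu_n^*F$ is pulled back from $Y^+$ and since $\mu_n$ contracts the new prime components of $D_n$, the projection formula kills their contribution, leaving
\begin{equation*}
e(\fa(v,n)^{[1]};\fm_0^{[m-1]})=\sum_i \ord_{E_i}(\fa(v,n))\,\tilde{a}_i,\qquad \tilde{a}_i:=(-1)^{m-1}(E_i\cdot F^{m-1})_{Y^+}.
\end{equation*}
Each $\tilde{a}_i$ is a positive integer: $F$ is Cartier on $Y^+$, so $F|_{E_i}$ is a Cartier divisor on the complete $k$-variety $E_i$ of dimension $m-1$, and $-F|_{E_i}$ is ample as the restriction of the relatively ample class of the blowup, so $(-F|_{E_i})^{m-1}=(-1)^{m-1}(F|_{E_i})^{m-1}$ is a positive integer. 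Dividing by $n$ and taking limits then yields $\alpha_1(v)=\sum_i \tilde{a}_ir_i/\beta(v/w_i)$, and setting $a_i:=r_i^2\tilde{a}_i\in\Z_{\ge1}$ gives the formula~\eqref{eq666}.

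For the uniform two-sided comparison, Theorem~\ref{T106}(i) identifies $\widehat{\ord_0}=\min_i w_i$, hence
\begin{equation*}
\sup_{\fm_0}\frac{v}{\widehat{\ord_0}}=\sup_{f}\max_i\frac{v(f)}{w_i(f)}=\max_i\beta(v/w_i),
\end{equation*}
the last equality holding because the maximum is over the finite set of Rees valuations. Izumi's Theorem applied to each pair $(w_i,w_j)$ produces a constant $C'=C'(Y,0)$ with $C'^{-1}w_j\le w_i\le C'w_j$ on $\fm_0$, and consequently the quantities $\beta(v/w_i)$ are pairwise comparable with constants independent of $v$. Combined with~\eqref{eq666}, this gives $\alpha_1(v)\asymp(\sup_{\fm_0}v/\widehat{\ord_0})^{-1}$, and Theorem~\ref{T106}(iv) upgrades this to the stated estimate with $\ord_0$. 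The special cases then fall out: when $\cO_{Y,0}$ has a unique Rees valuation ($k=1$), $\widehat{\ord_0}=w_1$ and $\alpha_1(v)=(a_1/r_1)\,\beta(v/w_1)^{-1}$ with $\theta:=a_1/r_1=r_1\tilde{a}_1\in\Q_{>0}$; at a smooth point, $\nu$ is the ordinary blowup with $r_1=1$ and $E_1\cong\P^{m-1}$, and $(E_1|_{E_1})^{m-1}=(-1)^{m-1}$ forces $\tilde{a}_1=1$, hence $\theta=1$.

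The main technical obstacle is the reduction of mixed multiplicity from the varying model $Y_n^+$ to intersection numbers on the fixed $Y^+$: one needs to confirm that the $\mu_n$-exceptional components of $D_n$ contribute nothing to $-D_n\cdot(\mu_n^*F)^{m-1}$, which is a projection-formula argument that uses crucially that $F$ is $\nu$-Cartier (the individual $E_i$ need not be). The limit step and the positivity of $\tilde{a}_i$ are then routine; the only subtlety is the need to treat the case $\beta(v/w_i)=+\infty$ separately in the upper-bound half of the limit, where "close to $\beta$" is replaced by "tending to $+\infty$".
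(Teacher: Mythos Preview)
Your argument is correct and follows essentially the same route as the paper: compute $e(\fa(v,n)^{[1]};\fm_0^{[m-1]})$ via the intersection-theoretic formula~\eqref{eq:geom}, identify the coefficients as $E_i\cdot(-F)^{m-1}$, and pass to the limit using $\lim_n\frac1n w_i(\fa(v,n))=\b(v/w_i)^{-1}$ (the paper isolates this as Lemma~\ref{lembdiv}); the comparison with $\ord_0$ then comes from Izumi's theorem and Theorem~\ref{T106}. Your treatment is in fact more careful than the paper's on two points: you pass to a model $Y_n^+$ dominating the blowup of $\fa(v,n)$ and invoke the projection formula explicitly (the paper applies~\eqref{eq:geom} directly on $Y^+$, where $\fa(v,n)$ need not be invertible), and your coefficient bookkeeping $a_i=r_i^2\tilde a_i$ is the right normalization to match the stated form $\sum_i\frac{a_i}{r_i}\b(v/w_i)^{-1}$. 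One cosmetic slip: the sign in your displayed identity $e=-D_n\cdot(\mu_n^*F)^{m-1}$ is inherited from the typo in~\eqref{eq:geom} and does not literally agree with your (correct) definition $\tilde a_i=(-1)^{m-1}(E_i\cdot F^{m-1})$; the intended formula is $e=D_n\cdot(-\mu_n^*F)^{m-1}$, after which everything is consistent.
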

\begin{rmk}
Suppose $\dim(Y) =2$ and the point $0$ is smooth.
Then~\cite[Remark 3.33]{valtree}, Propositions~\ref{prop:defmixed} and~\ref{p8} 
imply
$$
\a_1(v) = \left(\sup_{\fm_0} \frac{v}{\ord_0}\right)^{-1}, \, \text{ and }
\a_2(v) v(\fm_0) = \left(\sup_{\fm_0} \frac{v}{\ord_0}\right)^{-1},
$$
It follows  that
$\a_1^2(v) = \a_{0}(v)\, \a_{2}(v)$ if and only if
$\sup_{\fm_0} v/\ord_0  = v(\fm_0)$. The latter condition is equivalent
to $v$ being proportional to $\ord_0$.
\end{rmk}

\begin{proof}
As in \S\ref{S113}, let $\nu : Y^+ \to Y$ be the normalized blowup of $\fm_0$, write
$\fm_0\cdot \cO_{Y^+} = \cO_{Y^+} ( - \sum_i r_i E_i)$, so that $w_i = r_i^{-1} \ord_{E_i}$.
\begin{lem}{~\!\cite[Lemma 2.4]{jonmus}}\label{lembdiv}
$\b(v/w)^{-1} = \lim_n \frac1n w (\fa(v,n)) $.
\end{lem}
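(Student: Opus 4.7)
The plan is to introduce $a_n:=w(\fa(v,n))$, show that $a_n/n$ converges by Fekete's lemma, and then sandwich $\lim_n a_n/n$ between $\b(v/w)^{-1}$ and itself via two matching inequalities.

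First, I would verify that $(a_n)$ is subadditive: for $f\in\fa(v,n)$ and $g\in\fa(v,m)$ the product $fg$ lies in $\fa(v,n+m)$, and since $w$ is a valuation, $w(fg)=w(f)+w(g)$; minimizing over such pairs yields $a_{n+m}\le a_n+a_m$. Fekete's lemma then gives existence of $\lim_n a_n/n=\inf_n a_n/n$.

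For the lower bound $\lim_n a_n/n\ge\b(v/w)^{-1}$, I would simply invoke the definition of $\b$: any nonzero $f\in\fa(v,n)$ satisfies $w(f)\ge v(f)/\b(v/w)\ge n/\b(v/w)$, hence $a_n\ge n\,\b(v/w)^{-1}$.

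For the reverse inequality I would fix $\e>0$, select $f\in\fm_0\setminus\{0\}$ with $v(f)/w(f)\ge\b(v/w)-\e$, and pass to the powers $f^k$. Setting $n_k:=\lfloor kv(f)\rfloor$, one has $v(f^k)=kv(f)\ge n_k$, so $f^k\in\fa(v,n_k)$ and
\begin{equation*}
\frac{a_{n_k}}{n_k}\le\frac{w(f^k)}{n_k}=\frac{k\,w(f)}{\lfloor kv(f)\rfloor}\longrightarrow\frac{w(f)}{v(f)}\le\frac{1}{\b(v/w)-\e}
\end{equation*}
as $k\to\infty$; letting $\e\to 0$ closes the sandwich.

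The only mildly delicate point is that $v(f)$ need not be rational, which is why I pass through the powers $f^k$ in order to produce an integer scale $n_k$ at which the ideals $\fa(v,\cdot)$ are defined (with $kv(f)/n_k\to 1$). Beyond this, I do not foresee any substantial obstacle.
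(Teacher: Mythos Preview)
Your proposal is correct and follows essentially the same route as the paper's own proof: both establish existence of the limit via subadditivity of $n\mapsto w(\fa(v,n))$, obtain the lower bound from $v\le\b(v/w)\,w$, and obtain the upper bound by choosing $f$ nearly realizing the supremum and passing to powers $f^k$. Your treatment is in fact slightly more careful than the paper's, since you handle the integrality of the index explicitly via $n_k=\lfloor k\,v(f)\rfloor$, whereas the paper simply writes $n=v(f)$ and replaces $f$ by $f^l$.
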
 
By~\eqref{eq:geom}, we get
$$
e (\fa (v,n)^{[1]}; \fm_0^{[m-1]})
= 
\left( \sum_i \ord_{E_i} (\fa(v,n))E_i \right) \cdot \left( - \sum_j r_j E_j\right)^{m-1}
= \sum \frac{a_i}{r_i} \, w_i (\fa(v,n))
$$
with $a_i := E_i \cdot ( \sum_j - r_j E_j)^{m-1}$. 
Then~\eqref{eq666} follows from the previous lemma, by
dividing by $n$ and letting $n\to \infty$.

\smallskip

Pick any rank $1$ valuation $v$ on $\cO_{Y,0}$ centered at $0$, and write
$$\beta(v) := \sup_{\fm_0} \frac{v}{\ord_0}~.$$ 
We have
$ v \le \beta(v)\, \ord_0 \le  \beta(v) w_i$ for all $i$, 
so that
$ \b(v/w_i)\le \beta(v)$, and
$\a_1(v) \ge (\sum \frac{a_i}{r_i}) \beta(v)^{-1}$.

Conversely, $\a_1(v) \le (\sum \frac{a_i}{ r_i}) \b(v/w_i)^{-1}$ for some $i$,
whereas $\b(v/w_i) \ge C^{-1} \sup_{\fm_0} v/ \ord_0 = C^{-1} \beta(v)$ by Izumi's theorem
applied to $w_i$.
This proves $\a_1(v) \le C(\sum \frac{a_i}{ r_i}) \beta(v)^{-1}$ as required.

\smallskip

Finally if $\cO_{Y,0}$ has a unique Rees valuation $w_i$, then~\eqref{eq666}
and Theorem~\ref{T106} (i) imply $\a_1(v) = \frac{a_i}{r_i} (\sup_{\fm_0} v/\widehat{\ord_0})^{-1}$. Finally when $0$ is a smooth point, it is easy to see
that $a_i= r_i = 1$. 
This concludes the proof.
\end{proof}

\begin{proof}[Proof of Lemma~\ref{lembdiv}]
We give a proof for completeness, see~\cite[Lemma 2.4]{jonmus}.
Observe first that since $\fa(v,n)$ is a graded sequence of ideals, then 
the limit $\lim \frac1n w (\fa(v,n))$ exists as $n\to\infty$.
Denote it by $\theta$.
For any $n$, we have
$n = v (\fa(v,n)) \le w(\fa(v,n)) \, \b(v/w)$ hence $1\le \theta \, \b(v/w)$.
Conversely, pick any $f\in \fm_0$, and let $n:= v(f)$. Then 
$f\in \fa(v,n)$ and  $w(\fa(v,n)) \le w(f)$ implies
$$
 \frac{1}{n} w(\fa(v,n)) \, v(f) \le\, w(f) 
$$
Replacing $f$ by $f^l$ and letting $l\to\infty$ we get
$v(f)\le \theta^{-1} w(f)$ which implies
$\b(v/w)\le  \theta^{-1}$. This concludes the proof.
\end{proof}

%
%%%%%%%%%%%%%%%%%%%%%%%%%%%%%%%%%%%%%%%%%%%%%%%%%%%%%%%%%%%%
%
\subsection{Proof of Corollaries~D and~E}\label{S117}

We start by proving Corollary~D.
Fix $ 0\le i \le m$. 
For any $v\in\D$ we have $v\ge\ord_0$, so that 
$\fa(v,n)\supset\fm_{0}^n$ for any $n\ge 1$.
This implies that
\begin{equation}
  \a_i(v)\le C
\end{equation}
for all $v\in\D$.

Fix a metric on $\D$ compatible with the affine structure.
Fix $v\in\D$ and an integer $n\ge1$.
It follows from Theorem~A that the function
$w\mapsto w(\fa(v,n))$ is Lipschitz continuous with
Lipschitz constant at most 
\begin{equation*}
  A\ord_0(\fa(v,n))\le A\, v(\fa(v,n)).
\end{equation*}
Consider a valuation $w\in\D$ such that 
$\|v-w\|<\frac1A$. We then have
\begin{align*}
  w(\fa(v,n))
  &\ge v(\fa(v,n))-Av(\fa(v,n))\|v-w\|\\
  &=(1-A\|v-w\|)v(\fa(v,n))\\
  &\ge (1-A\|v-w\|)n,
\end{align*}
so that 
\begin{equation}\label{eqincl}
  \fa(v,n)\subset\fa(w,n(1-A\|v-w\|)).
\end{equation}
and by~\eqref{eqinclu}
\begin{align*}
  \frac{e(\fa(v,n)^{[i]},\fm_0^{[m-i]})}{n^i}
  &\ge
  \frac{e(\fa(w,n(1-A\|v-w\|))^{[i]},\fm_0^{[m-i]})}{n^i}\\
  &=
  (1-A\|v-w|)^i\, \frac{e(\fa(w,n(1-A\|v-w\|))^{[i]},\fm_0^{[m-i]})}{n^i(1-A\|v-w\|)^i}.
\end{align*}
Letting $n\to\infty$ we obtain
\begin{equation*}
  \a_i(v)\ge(1-A\|v-w\|)^i\a_i(w),
\end{equation*}
so that 
\begin{align*}
  \a_i(w)-\a_i(v)
  &\le(1-(1-A\|v-w\|)^i)\a_i(w)\\
  &\le C(1-(1-A\|v-w\|)^i)\\
  &\le iCA\|v-w\|,
\end{align*}
where we have used the inequality 
$1-(1-t)^i\le it$ for $0\le t\le 1$.
Exchanging the roles of $v$ and $w$ we 
conclude that 
\begin{equation*}
  |\a_i(v)-\a_i(w)|\le iCA\|v-w\|
\end{equation*}
for all $v,w\in\D$ such that $\|v-w\|<\frac1A$.
This completes the proof of Corollary~D.

\medskip

We now prove Corollary~E. 
Pick four valuations $v,v',w,w'\in \D$.
As in the proof of Corollary~D we may assume $\max \{ \| v - w\|, \| v' - w'\|\} \le A$.
By~\eqref{eqincl} we get
$$
\fa(w,n) \subset \fa( v, n ( 1- A \| v-w\|)) \text{ and } \fa(w',n)) \le \fa( v', n ( 1- A \| v'-w'\|))~.
$$
In particular $n \le w (\fa(w,n)) \le w (\fa( v, n ( 1- A \| v-w\|)))$ so that
$\b(v/w)^{-1}\ge (1- A \| v-w\|)^{-1}$  by Lemma~\ref{lembdiv}. Since
$$\b(v/v') \le \b(v/w)\,  \b(w/v')\le \b(v/w) \, \b(w'/v') \, \b(w/w'),$$ 
we conclude
$$
\b(v/v') \le (1- A \| v-w\|) ( 1- A \| v'-w'\|)\, \b(w/w')
$$
which implies the Lipschitz continuity of $(v,v') \mapsto \b(v/v')$ as above.

%
%%%%%%%%%%%%%%%%%%%%%%%%%%%%%%%%%%%%%%%%%%%%%%%%%%%%%%%%%%%%
%
\subsection{Proof of Theorem~A'}\label{S116}
We keep the notation from the introduction.
Thus we let $(X,Z)$, $\D$, $J$, $\sigma_J$, $\xi_J$ and $(z_j)_{j\in J}$
be as in the discussion before Theorem~A'.
By Cohen's Theorem there is an isomorphism
$\widehat{\cO_{X,\xi_J}}\simeq\kappa[[\xi_j]]_{j\in J}$,
where $\kappa(\xi_J)$ is the residue field at $\xi_J$.
Fix such an isomorphism. Given $f\in\cO_{Y,0}$ we can then write
\begin{equation*}
  f=\sum_{\b\in\Z_{\ge0}^J}a_\b z^\b
\end{equation*}
with $a_\b\in\kappa(\xi_J)$.
The Newton polydron $\Nw(f,J)$ is then defined as
\begin{equation*}
  \Nw(f,J):=\Conv\left(\bigcup_{a_\b\ne0}(\b+\R_{\ge0}^J)\right)\subset\R_{\ge0}^J.
\end{equation*}
Let us give an alternative description of the Newton polyhedron, which 
shows that it does not depend on the choice of coordinates $z_j$ or
the choice of isomorphism in Cohen's theorem.
Consider $\sigma_J$ as embedded as the unit simplex in 
$\sum_{j\in J}\R e_j\simeq\R^J$
and let $\langle\cdot,\cdot\rangle$ be the standard scalar product on $\R^J$.
We then have
\begin{align*}
  \b\in\Nw(f,J)
  &\Leftrightarrow\langle v,\b\rangle \ge v(f)\ \text{for all $v\in\sigma_J$}\\
  &\Leftrightarrow\langle v,-\b\rangle \le\f(v)\ \text{for all $v\in\sigma_J$}\\
  &\Leftrightarrow-\b\in\Nw(\f),
\end{align*}
where $\Nw(\f)$ denotes the Newton polyhedron of the piecewise affine 
convex function $\f=\log|f|$ on the simplex $\sigma_J$, 
as defined in~\S\ref{S115}. 

Fix a norm on $\R^J$.
By Theorem~A, the Lipschitz constant of $\f$ on $\sigma$ is 
bounded by $A\ord_0(\f)$.
If $\b\in\R^J$ is an extremal point of $\Nw(f,J)$, then 
$-\b$ is an extremal point of $\Nw(\f)$ and we conclude from
Proposition~\ref{P114} that $\|\b\|\le AC\ord_0(f)$,
concluding the proof of Theorem~A'.
%
%%%%%%%%%%%%%%%%%%%%%%%%%%%%%%%%%%%%%%%%%%%%%%%%%%%%%%%%%%%%
%
\subsection{Proof of Corollary~C}
As in the introduction, we fix an embedding
$\A^m\hookrightarrow\P^m$ and call a smooth projective variety
$X$ an admissible compactification 
of $\A^m$ if $X$ admits a birational morphism
$\pi:X\to\P^m$ that is an isomorphism above $\A^m$ and such that the 
divisor $Z:=\pi^{-1}(\P^m\setminus\A^m)$ has simple normal crossing
support and that any nonempty intersection between irreducible
components of $Z$ is irreducible. Note that $Z$ then has connected
support as a consequence of Zariski's Main Theorem. Further, $X$
contains $\A^m$ as a Zariski open subset. 
We view the elements of the dual complex $\D=\D(X,Z)$ as
valuations on $k(X)$ normalized on $v(Z)=1$.

Let $L_d=\pi^*\cO(d)$ for $d\ge1$ and let $G$ be the pullback to $X$
of the zero locus on $P$ on $\P^m$. 
Thus $L_d=\cO_X(G)=L_1^{\otimes d}$ and we have
  $v(G)=v(P)+d$
for every $v\in\D$.
In particular, the functions $v\mapsto v(P)$ and $v\mapsto\chi_G(v):=v(G)$ 
on $\D$ have the same Lipschitz constant.
Now $\min_{v\in\D}v(P)=-d$, with the minimum being obtained 
at the divisorial valuation corresponding to the divisor 
$\P^m\setminus\A^m$, so $\min_\D\chi_G=0$.
We therefore get from Theorem~B that the Lipschitz constant of 
$\chi_G$ is bounded from above by 
\begin{align*}
  B\max_{J\subset I}|(G\cdot M^{n-|J|-1}\cdot E_J)|
  &=B\max_{J\subset I}|(L_d\cdot M^{n-|J|-1}\cdot E_J)|\\
  &=Bd\max_{J\subset I}|(L_1\cdot M^{n-|J|-1}\cdot E_J)|,
\end{align*}
which completes the proof.
%
%%%%%%%%%%%%%%%%%%%%%%%%%%%%%%%%%%%%%%%%%%%%%%%%%%%%%%%%%%%%%%%%%%%
%

\end{document}